
\documentclass{amsart}
\usepackage{amssymb,amsmath}
\usepackage{amsfonts}
\usepackage{color,cite}
\usepackage[normalem]{ulem}

\setcounter{MaxMatrixCols}{10}

\newtheorem{theorem}{Theorem}
\theoremstyle{plain}

\newtheorem{definition}{Definition}

\newtheorem{lemma}{Lemma}

\newtheorem{proposition}{Proposition}
\newtheorem{remark}{Remark}

\numberwithin{equation}{section}

\input{tcilatex}

\begin{document}
\title{The Brezis-Nirenberg problem for fractional elliptic operators}
\author{Ko-Shin Chen}
\address{Department of Mathematics\\
University of Connecticut\\
Storrs, CT 06269 USA}
\email{ko-shin.chen@uconn.edu}
\urladdr{}
\thanks{}
\author{Marcos Montenegro}
\address{Departamento de Matem\'{a}tica \\
Universidade Federal de Minas Gerais\\
Belo Horizonte, 30123-970 Brazil}
\email{montene@mat.ufmg.br}
\author{Xiaodong Yan}
\address{Department of Mathematics\\
University of Connecticut\\
Storrs, CT 06269 USA}
\email{xiaodong.yan@uconn.edu}
\thanks{}
\date{\today }
\subjclass[2000]{Primary \ 35A01, 35J20}
\keywords{fractional elliptic operator, critical exponent, existence,
nonexistence}

\begin{abstract}
Let $\mathcal{L} = \mathrm{div}(A(x) \nabla )$ be a uniformly elliptic
operator in divergence form in a bounded open subset $\Omega$ of $\mathbb{R}%
^n$. We study the effect of the operator $\mathcal{L}$ on the existence and
nonexistence of positive solutions of the nonlocal Brezis-Nirenberg problem

\begin{equation*}
\left\{
\begin{array}{rcll}
\displaystyle(-\mathcal{L})^{s}u & = & u^{\frac{n+2s}{n-2s}}+\lambda u & %
\mbox{in}\ \Omega , \\
u & = & 0 & \mbox{on}\ \mathbb{R}^{n}\setminus \Omega \\
&  &  &
\end{array}%
\right.
\end{equation*}%
where $(-\mathcal{L})^{s}$ denotes the fractional power of $-\mathcal{L}$
with zero Dirichlet boundary values on $\partial \Omega $, $0<s<1$, $n>2s$
and $\lambda $ is a real parameter. By assuming $A(x) \geq A(x_{0})$ for all
$x \in \overline{\Omega }$ and $A(x) \leq A(x_{0})+|x-x_{0}|^{\sigma }I_{n}$
near some point $x_{0}\in \overline{\Omega }$, we prove existence theorems
for any $\lambda \in (0,\lambda_{1,s}(-\mathcal{L}))$, where $\lambda_{1,s}(-%
\mathcal{L})$ denotes the first Dirichlet eigenvalue of $(-\mathcal{L})^{s}$%
. Our existence result holds true for $\sigma >2s$ and $n\geq 4s$ in the
interior case ($x_{0}\in \Omega $) and for $\sigma >\frac{2s(n-2s)}{n-4s}$
and $n > 4s$ in the boundary case ($x_{0}\in \partial \Omega$). Nonexistence
for star-shaped domains is obtained for any $\lambda \leq 0$.
\end{abstract}

\maketitle

\section{ \ Introduction and statements}

A lot of attention has been paid to a number of counterparts of the
Brezis-Nirenberg problem since the pioneer paper \cite{BN} which consists in
determining all values of $\lambda $ for which the problem

\begin{equation}  \label{BN}
\left\{
\begin{array}{rcll}
- \Delta u & = & u^{\frac{n+2}{n-2}} + \lambda u & \mathrm{in} \ \ \Omega,
\\
u & = & 0 & \mathrm{on} \ \ \partial\Omega%
\end{array}
\right.
\end{equation}
admits a positive solution, where $\Omega$ is a bounded open subset of $%
\mathbb{R}^n$, $n \geq 3$ and $\lambda$ is a real parameter.

According to \cite{BN}, the problem (\ref{BN}) admits a positive solution
for any $\lambda \in (0,\lambda _{1}(-\Delta ))$ provided that $n\geq 4$,
where $\lambda _{1}(-\Delta )$ denotes the first Dirichlet eigenvalue of $%
-\Delta $ on $\Omega $. Moreover, the problem has no such solution for $%
n\geq 3$ if either $\lambda \geq \lambda _{1}(-\Delta )$ or $\lambda \leq 0$
and $\Omega $ is a star-shaped $C^{1}$ domain. When $n=3$ and $\Omega $ is a
ball, a positive solution of (\ref{BN}) exists if, and only if, $\lambda \in
(\frac{1}{4}\lambda _{1}(-\Delta ),\lambda _{1}(-\Delta ))$.

The Brezis-Nirenberg problem for uniformly elliptic operators in divergence
form has been studied in the works \cite{deMoMo,HE,HM}. Precisely, consider
the problem

\begin{equation}  \label{BN-g}
\left\{
\begin{array}{rcll}
- \mathcal{L} u & = & u^{\frac{n+2}{n-2}} + \lambda u & \mathrm{in} \ \
\Omega, \\
u & = & 0 & \mathrm{on} \ \ \partial\Omega%
\end{array}
\right.
\end{equation}
where $\mathcal{L} = \mathrm{div}(A(x) \nabla )$. Assume that $A(x) =
(a_{ij}(x))$ is a positive definite symmetric matrix for each $x \in
\overline{\Omega}$ with continuous entries on $\overline{\Omega}$, so that $%
\mathcal{L}$ is a selfadjoint uniformly elliptic operator. Assume also there
exist a point $x_0 \in \overline{\Omega}$ and a constant $C_0 > 0$ such that

\begin{equation}
A(x) \geq A(x_0) \ \ \mathrm{for\ every} \ x \in \overline{\Omega}
\label{H2}
\end{equation}
and

\begin{equation}  \label{H1}
A(x) \leq A(x_0) + C_0 |x - x_0|^\sigma I_n \ \ \mathrm{locally\ around} \
x_0
\end{equation}
both in the sense of bilinear forms, where $I_n$ denotes the $n \times n$
identity matrix. In \cite{HE}, Egnell focused on the interior case ($x_0 \in
\Omega$) and proved that problem (\ref{BN-g}) admits a positive solution for
any $\lambda \in (0, \lambda_1(-\mathcal{L}))$ provided that $n \geq 4$ and $%
\sigma > 2$, where $\lambda_1(-\mathcal{L})$ denotes the first Dirichlet
eigenvalue of $-\mathcal{L}$ on $\Omega$. The boundary case ($x_0 \in
\partial \Omega$) has recently been treated in \cite{HM}, which proves the
existence of a positive solution for any $\lambda \in (0, \lambda_1(-%
\mathcal{L}))$ provided that $n > 4$, $\sigma > \frac{2n-4}{n-4}$ and the
boundary of $\Omega$ is $\alpha$-singular at $x_0$, with $\alpha \in [1,
\sigma \frac{n - 4}{2n - 4})$, in the following sense:

\begin{definition}
\label{alphasingular} \label{boundary regularity} The boundary of an open
subset $\Omega$ of $\mathbb{R}^n$ is said to be $\alpha$-singular at the
point $x_0$, with $\alpha \geq 1$, if there exist a constant $\delta > 0$
and a sequence $(x_j) \subset \Omega$ such that $x_j \to x_0$ as $j
\rightarrow + \infty$ and $B(x_j, \delta |x_j - x_0|^\alpha) \subseteq \Omega
$.
\end{definition}

The nonexistence of positive solutions of (\ref{BN-g}) for $\lambda \leq 0$
on star-shaped domains has been proved in \cite{HE} (see also \cite{deMoMo})
by assuming $a_{ij}\in C^{1}(\overline{\Omega }\setminus \{x_{0}\})$ such
that $a_{ij}^{\prime }(x):=\nabla a_{ij}(x)\cdot (x-x_{0})$ extends
continuously to $x_{0}$ and $A^{\prime }(x)=(a_{ij}^{\prime }(x))$ is
positive semi-definite for every $x\in \Omega $. Nonexistence of positive
solution in the case $\lambda \geq \lambda _{1}(-\mathcal{L})$ follows from
a standard argument.

When $0<s<1$, Caffarelli and Silvestre \cite{CaSi} introduced the
characterization of the fractional power of the Laplace operator $(-\Delta)^s
$ in terms of a Dirichlet-to-Neumann map associated to a suitable extension
problem. Since then, a great deal of attention has been dedicated in the
last years to nonlinear nonlocal problems involving this operator. See for
example \cite{BCPS, BrCPS, CT, CDDS, choi, CK, T, yang}, among others. Two
of them (see \cite{T} for $s = \frac12$ and \cite{BCPS} for other values of $%
s \in (0,1)$) consider the following counterpart of the Brezis-Nirenberg
problem

\begin{equation}  \label{BN-frac}
\left\{
\begin{array}{rcll}
(- \Delta)^s u & = & u^{\frac{n+2s}{n-2s}} + \lambda u & \mathrm{in} \ \
\Omega, \\
u & = & 0 & \mathrm{in} \ \ \mathbb{R}^n \setminus \Omega%
\end{array}
\right.
\end{equation}
In particular, it has been proved that problem (\ref{BN-frac}) admits a
positive viscosity solution for any $\lambda \in (0, \lambda_{1,s}\left(-\Delta
\right))$ provided that $n \geq 4s$, where $\lambda_{1,s}\left(-\Delta\right)$
denotes the first Dirichlet eigenvalue of $(-\Delta)^s$ on $\Omega$.
Moreover, there exists no such solution in $C^1(\overline{\Omega})$ for $n >
2s$ if either $\lambda \leq 0$ and $\Omega$ is a star-shaped $C^1$ domain or
$\lambda \geq \lambda_{1,s}\left(-\Delta \right)$.

This work dedicates special attention to the effect of the elliptic operator
$\mathcal{L}$ (or of the matrix $A(x)$) on the existence and nonexistence of
positive viscosity solutions of the following Brezis-Nirenberg problem
involving the fractional power of $-\mathcal{L}$,

\begin{equation}  \label{fractionalbn}
\left\{
\begin{array}{rcll}
(- \mathcal{L})^s u & = & u^{\frac{n+2s}{n-2s}} + \lambda u & \mathrm{in} \
\ \Omega, \\
u & = & 0 & \mathrm{in} \ \ \mathbb{R}^n \setminus \Omega%
\end{array}
\right.
\end{equation}

Denote by $\lambda_{1,s}\left(-\mathcal{L} \right)$ the first Dirichlet
eigenvalue of $(-\mathcal{L})^s$ on $\Omega$.

The main existence theorems are:

{ }

\begin{theorem}
\label{T.1} \textrm{(Interior\ case)} Let $0<s<1$ and $\Omega \subset
\mathbb{R}^{n}$ be a bounded open set. Assume entries of the matrix $A$ are
continuous in $\overline{\Omega }$ and $A$ satisfies (\ref{H2}), (\ref{H1})
for some $x_{0}\in \Omega $. Then (\ref{fractionalbn}) admits at least one
positive weak solution for any $\lambda \in \left( 0,\lambda _{1,s}\left( -%
\mathcal{L}\right) \right) $ provided $n\geq 4s$ and $\sigma >2s$. If $%
\partial \Omega $ is of $C^{1,1}$ class and each entry of $A(x)$ belongs to $%
C^{1}(\overline{\Omega })$, then our weak solution $u$ belongs to $%
C^{0,\alpha }(\overline{\Omega })$ if $0<s<1/2$ and to $C^{1,\alpha }(%
\overline{\Omega })$ if $1/2\leq s<1$ for any $0<\alpha <1$.
\end{theorem}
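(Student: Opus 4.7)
The plan is to carry out a Brezis--Nirenberg style mountain-pass argument for the energy functional associated with (\ref{fractionalbn}). First I would set up the variational framework: define the Hilbert space $H_0^s(\Omega)$ via the spectral decomposition of $-\mathcal{L}$ with Dirichlet conditions (equivalently, via the extension problem to $\Omega\times(0,\infty)$ with weight $y^{1-2s}$ in the spirit of Caffarelli--Silvestre and Stinga--Torrea), and consider
$$I_{\lambda}(u)=\frac{1}{2}\|u\|_{H_0^s(\Omega)}^{2}-\frac{\lambda}{2}\int_{\Omega}u^{2}\,dx-\frac{n-2s}{2n}\int_{\Omega}|u|^{\frac{2n}{n-2s}}\,dx.$$
Positive critical points of $I_{\lambda}$ are the desired weak solutions. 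For $\lambda\in(0,\lambda_{1,s}(-\mathcal{L}))$ the functional has mountain-pass geometry by the variational characterization of $\lambda_{1,s}(-\mathcal{L})$, so there is a minimax candidate value $c_{\lambda}>0$.

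Next I would identify the compactness threshold. By a fractional concentration--compactness argument in the spirit of \cite{BCPS}, $I_{\lambda}$ satisfies $(\mathrm{PS})_{c}$ for every $c<\frac{s}{n}S_{A}^{n/(2s)}$, where $S_{A}$ denotes the best Sobolev constant associated to the frozen operator $(-\mathcal{L}_{0})^{s}$ with $\mathcal{L}_{0}=\mathrm{div}(A(x_{0})\nabla)$. Since $A(x_{0})$ is a constant symmetric positive-definite matrix, the change of variables $y=A(x_{0})^{-1/2}(x-x_{0})$ reduces $\mathcal{L}_{0}$ to the ordinary Laplacian, so $S_{A}$ is computable from the classical Talenti-type extremals for $(-\Delta)^{s}$ up to a factor involving $\det A(x_{0})$, and those extremals $U_{\varepsilon}$ are explicit.

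The decisive step is then to establish the strict inequality $c_{\lambda}<\frac{s}{n}S_{A}^{n/(2s)}$. I would construct test functions $u_{\varepsilon}(x)=\eta(x)\,U_{\varepsilon}(A(x_{0})^{-1/2}(x-x_{0}))$, where $\eta$ is a smooth cutoff supported in a small neighborhood of $x_{0}\in\Omega$, and expand $\sup_{t\ge 0}I_{\lambda}(tu_{\varepsilon})$ in powers of $\varepsilon$. Hypothesis (\ref{H2}) allows the quadratic form of $(-\mathcal{L})^{s}$ to be bounded above (at the level of the extension, which respects the bilinear-form order on $A$) by its analog for $(-\mathcal{L}_{0})^{s}$ plus an error which, by (\ref{H1}), is of order $\varepsilon^{\sigma}$ (with a logarithmic correction in borderline cases). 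The linear mass term contributes a negative correction of order $\varepsilon^{2s}$ when $n>4s$ and $\varepsilon^{2s}|\log\varepsilon|$ when $n=4s$, exactly as in the fractional Brezis--Nirenberg calculation for $(-\Delta)^{s}$. The condition $\sigma>2s$ then makes the bad error strictly dominated by the good $\lambda$-correction for $\varepsilon$ small, yielding the required strict inequality and a critical point at level $c_{\lambda}$. Positivity of the solution is obtained by replacing the nonlinearity with $(u^{+})^{(n+2s)/(n-2s)}$ and applying the fractional weak maximum principle.

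The main technical obstacle will be the energy expansion just sketched, because the spectrally defined operator $(-\mathcal{L})^{s}$ does not come with a clean pointwise integral kernel on $\Omega$, so the comparison between $\mathcal{L}$ and $\mathcal{L}_{0}$ does not immediately transfer to a comparison between $(-\mathcal{L})^{s}$ and $(-\mathcal{L}_{0})^{s}$ on localized test functions. I would handle this by working in the extended problem on $\Omega\times(0,\infty)$, where the bilinear-form inequalities (\ref{H2}) and (\ref{H1}) pass directly to the weighted Dirichlet energy, and the estimates reduce to weighted integrals of explicit bubbles in $\mathbb{R}^{n+1}_{+}$; these can be computed essentially as in the local analysis of Egnell \cite{HE}, with the fractional scaling of \cite{BCPS}. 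Once existence is secured, the final regularity statement for $\partial\Omega\in C^{1,1}$ and $A\in C^{1}(\overline{\Omega})$ follows from a Moser-type iteration giving an $L^{\infty}$ bound, followed by known interior and boundary Schauder/H\"older estimates for fractional divergence-form operators, with the $C^{0,\alpha}$ vs.\ $C^{1,\alpha}$ dichotomy at $s=1/2$ reflecting the usual threshold for boundary regularity of $(-\mathcal{L})^{s}$.
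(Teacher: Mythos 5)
Your proposal follows essentially the same route as the paper: work in the Stinga--Torrea extension, establish compactness below the threshold set by $\det(A(x_0))$ and the fractional Sobolev trace constant, and then verify the strict energy inequality by constructing localized bubbles and pitting the $\varepsilon^{\sigma}$ error against the $\varepsilon^{2s}$ gain from the linear term. (The paper phrases this as a constrained minimization over the Nehari-type set $E$ rather than a mountain-pass argument, but the two are interchangeable for this problem.) Two points to tighten. First, you have the roles of (\ref{H2}) and (\ref{H1}) reversed in the test-function expansion: (\ref{H2}), $A(x)\geq A(x_0)$, is the \emph{lower} bound that drives the compactness/concentration argument, whereas it is (\ref{H1}), $A(x)\leq A(x_0)+C_0|x-x_0|^{\sigma}I_n$, that produces the $\varepsilon^{\sigma}$ upper error in the bubble energy. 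Second, the phrase ``computed essentially as in Egnell with the fractional scaling'' understates what is actually the hardest step: showing that
\begin{equation*}
\iint_{\{r_{xy}\leq r/\varepsilon\}} y^{1-2s}\,|x|^{\sigma}\,|\nabla_x w_1(x,y)|^{2}\,dx\,dy
\end{equation*}
is $O(1)$, $O(\log(1/\varepsilon))$, or $O(\varepsilon^{\,n-2s-\sigma})$ according as $\sigma<n-2s$, $\sigma=n-2s$, or $\sigma>n-2s$. Here $w_1$ is only given as a Poisson-type integral with no elementary closed form, and the paper devotes the bulk of Section 3 to a region-by-region decomposition of $\nabla_x w_1$ (the terms $I_1,I_2,I_3$ split by $|\xi|\lessgtr|x|/2$ and $3|x|/2$, and further by $y\lessgtr|x|$) to obtain these bounds; an appeal to Egnell's local computation will not suffice on its own.
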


{ }

\begin{theorem}
\label{T.2} \textrm{(Boundary case)} Let $0<s<1$ and $\Omega \subset \mathbb{%
R}^{n}$ be a bounded open set. Assume entries of the matrix $A$ are
continuous in $\overline{\Omega }$ and $A$ satisfies (\ref{H2}), (\ref{H1})
for some $x_{0}$ on $\partial \Omega $. Suppose $\partial \Omega $ is $%
\alpha $-singular at $x_{0}$. Then (\ref{fractionalbn}) admits at least one
positive weak solution for any $\lambda \in \left( 0,\lambda _{1,s}\left( -%
\mathcal{L}\right) \right) $ if $n>4s$, $\sigma >\frac{2s(n-2s)}{n-4s}$ and $%
1\leq \alpha <\frac{\sigma (n-4s)}{2s(n-2s)}$. If $\partial \Omega $ is of $%
C^{1,1}$ class and each entry of $A(x)$ belongs to $C^{1}(\overline{\Omega })
$, then our weak solution $u$ belongs to $C^{0,\alpha }(\overline{\Omega })$
if $0<s<1/2$ and to $C^{1,\alpha }(\overline{\Omega })$ if $1/2\leq s<1$ for
any $0<\alpha <1$.
\end{theorem}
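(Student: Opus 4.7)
My plan is the standard variational/concentration–compactness approach adapted to a boundary concentration point and to the fractional divergence operator. Using the Caffarelli–Silvestre type extension associated to $(-\mathcal{L})^{s}$, one introduces the quadratic form associated to $\operatorname{div}(y^{1-2s} A(x)\nabla U)$ on $\mathcal{C}_{\Omega}=\Omega\times(0,\infty)$, with trace $u=\operatorname{tr} U$, and the Rayleigh quotient
\[
Q_{\lambda}(u) \;=\; \frac{\|u\|_{\mathcal{H}_{A}}^{2} - \lambda\|u\|_{L^{2}(\Omega)}^{2}}{\|u\|_{L^{2^{*}_{s}}(\Omega)}^{2}},\qquad 2^{*}_{s}=\frac{2n}{n-2s}.
\]
Set $S_{\lambda}(\mathcal{L})=\inf Q_{\lambda}$. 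A by-now-standard concentration–compactness argument shows that $S_{\lambda}(\mathcal{L})$ is attained by a positive function — yielding, after normalization, a positive weak solution of (\ref{fractionalbn}) — as soon as $S_{\lambda}(\mathcal{L})<S$, where $S$ is the best constant in the fractional Sobolev inequality on $\mathbb{R}^{n}$. The proof then reduces to exhibiting a test function with $Q_{\lambda}<S$.

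By (\ref{H2}) and a linear change of variables we may assume $A(x_{0})=I_{n}$. By the $\alpha$-singularity at $x_{0}$, fix a sequence $x_{j}\to x_{0}$ with $B(x_{j},\delta|x_{j}-x_{0}|^{\alpha})\subset\Omega$. Let $U$ be the Aubin–Talenti extremal realizing $S$, set $U_{\varepsilon}(x)=\varepsilon^{-(n-2s)/2}U((x-x_{j})/\varepsilon)$, and multiply by a smooth cutoff $\phi_{j}$ supported in $B(x_{j},\tfrac{\delta}{2}|x_{j}-x_{0}|^{\alpha})$ equal to $1$ on half that ball. The asymptotic expansion of $Q_{\lambda}(\phi_{j}U_{\varepsilon})$ produces three leading contributions: a cutoff-truncation error of order $(\varepsilon/|x_{j}-x_{0}|^{\alpha})^{n-2s}$; a coefficient-perturbation error controlled via (\ref{H1}) by $C(|x_{j}-x_{0}|+\varepsilon)^{\sigma}\sim |x_{j}-x_{0}|^{\sigma}$ once $\varepsilon\ll|x_{j}-x_{0}|$; and the negative linear gain $-c\lambda\varepsilon^{2s}$, which has this form precisely because $n>4s$.

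Scale balancing then dictates the hypotheses. Taking $\varepsilon=|x_{j}-x_{0}|^{\beta}$, the linear gain dominates both error terms provided
\[
\beta(n-4s)>\alpha(n-2s) \qquad\text{and}\qquad 2s\beta<\sigma.
\]
A common $\beta$ exists if and only if $\alpha(n-2s)/(n-4s)<\sigma/(2s)$, i.e.\ $\alpha<\sigma(n-4s)/(2s(n-2s))$, which is exactly the stated hypothesis. For such $\beta$ and $j$ large one obtains $Q_{\lambda}(\phi_{j}U_{\varepsilon})<S$, hence $S_{\lambda}(\mathcal{L})<S$ and the existence of a positive weak solution. The regularity claim then follows by bootstrapping: an $L^{\infty}$ bound via Moser iteration turns the right-hand side $u^{(n+2s)/(n-2s)}+\lambda u$ into an $L^{\infty}$ datum, and known boundary Hölder/Schauder estimates for fractional divergence operators with $C^{1}$ coefficients on $C^{1,1}$ domains deliver the stated $C^{0,\alpha}$ or $C^{1,\alpha}$ regularity.

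The central difficulty I expect is precisely this two-parameter balancing. In the interior case treated by Theorem \ref{T.1}, one may center the bubble at $x_{0}$ itself, so the distance $|x_{j}-x_{0}|$ does not enter and only the concentration scale $\varepsilon$ matters. In the boundary case the bubble must sit strictly inside $\Omega$, so the $\alpha$-singularity condition is what creates room for the construction, and the three asymptotic rates — cutoff, coefficient perturbation, and linear gain — have to be matched against a single one-parameter family $\varepsilon=|x_{j}-x_{0}|^{\beta}$. Verifying the expansion carefully (tracking the numerator, denominator and all cross terms in the extended variable $y$ with weight $y^{1-2s}$) and checking that the two inequalities on $\beta$ indeed have a common solution under the stated bounds on $\sigma$ and $\alpha$ is the technical heart of the argument.
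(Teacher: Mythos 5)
Your proposal follows essentially the same route as the paper: after the linear change of variables normalizing $A(x_0)=I_n$, you concentrate a cutoff Aubin--Talenti bubble at an interior point $x_j\to x_0$ guaranteed by $\alpha$-singularity, choose the bubble scale $\varepsilon=|x_j-x_0|^{\beta}$, and balance the cutoff-truncation error $|x_j-x_0|^{(n-2s)(\beta-\alpha)}$, the coefficient-perturbation error $|x_j-x_0|^{\sigma}$, and the linear gain $-c\lambda|x_j-x_0|^{2s\beta}$ to obtain exactly the paper's condition $\alpha(n-2s)/(n-4s)<\beta<\sigma/(2s)$. This is precisely the construction and the scale-balancing used in Section~4 of the paper, with the regularity following from the same fractional Caccioppoli/Schauder bootstrap (Proposition~2), so your argument matches the authors' proof.
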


The nonexistence theorem states that

{ }

\begin{theorem}
\label{T.3} Let $0<s<1$, $n>2s$, and $\Omega \subset \mathbb{R}^{n}$ be a
bounded open set which is star-shaped with respect to some point $x_{0}\in
\overline{\Omega }$ and its boundary is of $C^{1}$ class. Assume matrix $A$
satisfies \eqref{H2}-\eqref{H1} for some $x_{0}$ in $\overline{\Omega }$,
moreover, assume $a_{ij}\in C^{1}(\overline{\Omega }\setminus \{x_{0}\})$
and $a_{ij}^{\prime }(x):=\nabla a_{ij}(x)\cdot (x-x_{0})$ extends
continuously to $x_{0}$ and $A^{\prime }(x)=(a_{ij}^{\prime }(x))$ is
positive semi-definite for every $x\in \Omega $. Then (\ref{fractionalbn})
admits no positive solution in $C^{1}(\overline{\Omega })$ for any $\lambda
\leq 0$. Furthermore, if $\partial \Omega $ is of $C^{1,1}$ class and each
entry of $A(x)$ belongs to $C^{1}(\overline{\Omega })$, then (\ref%
{fractionalbn}) admits no positive weak solution for any $\lambda \leq 0$
provided that $s\geq 1/2$.
\end{theorem}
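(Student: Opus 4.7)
The plan is to derive a Pohozaev-type identity for $(-\mathcal{L})^{s}$ via the weighted harmonic extension of Stinga--Torrea, and then exploit the semi-definiteness of $A'$ together with the star-shapedness of $\Omega$ to force $u\equiv 0$ whenever $\lambda\leq 0$.

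Given a positive solution $u$, let $U$ be its extension on $\mathcal{C} = \Omega\times(0,\infty)$, solving
\[
\mathrm{div}_x(y^{1-2s}A(x)\nabla_x U) + \partial_y(y^{1-2s}\partial_y U) = 0\quad\text{in }\mathcal{C},
\]
with $U=u$ on $\Omega\times\{0\}$, $U=0$ on $\partial\Omega\times[0,\infty)$, and $-\kappa_s \lim_{y\to 0^+}y^{1-2s}\partial_y U = u^{p}+\lambda u$, $p=(n+2s)/(n-2s)$. I would test this equation against three multipliers and combine the identities. Testing against $U$ yields the energy identity $E_A + E_y = \kappa_s^{-1}\int_\Omega (u^{p+1}+\lambda u^2)$ with $E_A := \int_\mathcal{C} y^{1-2s}(A\nabla_x U,\nabla_x U)$ and $E_y := \int_\mathcal{C} y^{1-2s}(\partial_y U)^2$. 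Testing against $(x-x_0)\cdot \nabla_x U$, after $x$-integration by parts using the symmetry of $A$, the $C^1$ regularity of $a_{ij}$ on $\overline\Omega\setminus\{x_0\}$, and the relation $\nabla_x U = (\partial_\nu U)\nu$ on the lateral boundary, contributes the bulk $\tfrac{n-2}{2}E_A + \tfrac{n}{2}E_y + \tfrac12\int_\mathcal{C} y^{1-2s}(A'(x)\nabla_x U,\nabla_x U)$, the Dirichlet-to-Neumann trace $\kappa_s^{-1}\int_\Omega (u^p+\lambda u)(x-x_0)\cdot\nabla u$, and the nonnegative lateral boundary term $B := \tfrac12\int_{\partial\Omega\times(0,\infty)} y^{1-2s}((x-x_0)\cdot\nu)(A\nu,\nu)(\partial_\nu U)^2$. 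Testing against $y\partial_y U$, two $y$-integrations by parts (boundary contributions being killed by $s\in(0,1)$ and decay at infinity) give $(1-s)E_A - sE_y$.

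Summing the three identities, the bulk coefficients of $E_A$ and $E_y$ collapse to the common value $\tfrac{n-2s}{2}$, reflecting that the combined multiplier $(x-x_0)\cdot\nabla_x U + y\,\partial_y U$ generates the full $(n+1)$-dimensional scaling vector field; the $\int y^{1-2s}(A'\nabla_x U,\nabla_x U)$ term is precisely the defect from invariance. Invoking the energy identity and the $x$-Pohozaev computation $\int_\Omega(u^p+\lambda u)(x-x_0)\cdot\nabla u = -\tfrac{n-2s}{2}\int u^{p+1} - \tfrac{n\lambda}{2}\int u^2$ (whose first coefficient comes from $p+1 = 2n/(n-2s)$ being critical), the $\int u^{p+1}$ contributions cancel exactly and I obtain
\[
s\,\kappa_s^{-1}\,\lambda \int_\Omega u^2\,dx \;=\; \tfrac12 \int_{\mathcal{C}} y^{1-2s}(A'(x)\nabla_x U, \nabla_x U)\,dxdy \;+\; B.
\]
For $\lambda\leq 0$ the LHS is nonpositive while the RHS is nonnegative (star-shapedness yields $(x-x_0)\cdot\nu\geq 0$, $A$ is positive definite, and $A'\geq 0$ by hypothesis); both must vanish. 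When $\lambda < 0$ this forces $\int u^2 = 0$, contradicting positivity of $u$. When $\lambda = 0$, $B = 0$ forces $\partial_\nu U = 0$ on a nonempty open subset of $\partial\Omega\times(0,\infty)$ where $(x-x_0)\cdot\nu > 0$, and together with the vanishing Dirichlet trace, unique continuation for the degenerate extension equation yields $U\equiv 0$, hence again $u\equiv 0$.

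The main technical obstacle is the rigorous justification of these integration-by-parts manipulations. The possible failure of $C^1$ regularity of $A$ at $x_0$ is handled by working on $(\Omega\setminus\overline{B_\varepsilon(x_0)})\times(0,\infty)$ and passing $\varepsilon\to 0$, using the continuous extension of $a'_{ij}$ up to $x_0$ to control the error surface integral on $\partial B_\varepsilon(x_0)\times(0,\infty)$. The weighted boundary terms at $y=0$ and $y=\infty$ vanish thanks to $s\in(0,1)$ and the $C^1(\overline\Omega)$ regularity of $u$ for the first statement. For the weak-solution version ($\partial\Omega\in C^{1,1}$, $a_{ij}\in C^1(\overline\Omega)$), Theorems~\ref{T.1}--\ref{T.2} supply $u\in C^{1,\alpha}(\overline\Omega)$; the hypothesis $s\geq 1/2$ is precisely what is needed so that the trace of $\nabla_x U$ on the lateral boundary lies in $L^2_{\mathrm{loc}}(y^{1-2s}d\sigma dy)$ and so that $B$ and the Dirichlet-to-Neumann trace at $y=0$ are unambiguously defined. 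Verifying these trace/density arguments, and the unique continuation for the degenerate equation in the borderline case $\lambda=0$, will be the most delicate technical step.
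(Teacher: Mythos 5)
Your derivation of the Pohozaev identity via the Stinga--Torrea extension is essentially the same as the paper's (Lemma~\ref{lemma pz}), and the identity you obtain,
\begin{equation*}
\frac{s}{c_s}\,\lambda \int_\Omega u^2\,dx
= \frac12 \iint_{\mathcal{C}_\Omega} y^{1-2s} (\nabla_x w)^T A'(x) \nabla_x w\,dxdy + B,
\end{equation*}
matches the paper's up to notation. The genuine divergence is in the final step. You observe only that the lateral boundary term $B$ is nonnegative, which forces you into a case split: $\lambda<0$ is immediate, but for $\lambda=0$ you try to conclude from $B=0$ that $\partial_\nu w=0$ on an open piece of $\partial_L\mathcal{C}_\Omega$ and then invoke unique continuation for the degenerate extension equation. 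That unique-continuation step is a real gap --- you flag it yourself as ``the most delicate technical step'' but leave it unfilled, and it is not an off-the-shelf fact for the weighted operator $\mathrm{div}(y^{1-2s}B\nabla\cdot)$. (It is likely provable by working at a fixed slice $y_0>0$ where the equation is uniformly elliptic and using Cauchy-data unique continuation there, but you have not done this, and the paper never needs it.)

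The paper sidesteps all of this with a much cleaner observation that you have missed: since $u>0$ in $\Omega$, the extension $w=E^A_s(u)$ is positive in $\mathcal{C}_\Omega$ and vanishes on $\partial_L\mathcal{C}_\Omega$; away from $y=0$ the extension operator is uniformly elliptic, so the Hopf lemma applies at every lateral boundary point and gives $\nabla_x w \neq 0$ on $\partial_L\mathcal{C}_\Omega$. Combined with $(x-x_0)\cdot\nu_\Omega \geq 0$ (with strict inequality on a set of positive surface measure, by star-shapedness) and positive definiteness of $A$, this makes the lateral boundary integral \emph{strictly} positive for any positive $C^1$ solution. Since the right-hand side of the identity is nonpositive whenever $\lambda\leq 0$ and $A'\geq 0$, one gets a direct contradiction covering $\lambda<0$ and $\lambda=0$ in a single stroke, with no case split and no unique continuation. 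You should replace your $\lambda=0$ argument with this Hopf-lemma observation. One further minor correction: regularity of a given nonnegative weak solution (needed for the second clause of the theorem) is supplied by Proposition~\ref{regularity}, not by the existence Theorems~\ref{T.1}--\ref{T.2} as you cite.
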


Theorems \ref{T.1} and \ref{T.3} extend the existence and nonexistence
results of \cite{BCPS} and \cite{T}, since the constant matrix $A(x) = I_n$
clearly fulfills our assumptions. All results in \cite{HE,HM} for $s = 1$
are also extended fully to any $0 < s < 1$. One prototype example of
operator $\mathcal{L}$ is $A(x) = A_0 + |x - x_0|^\sigma I_n$.

A natural approach for solving (\ref{fractionalbn}) consists in searching
for minimizers of the functional

\begin{equation*}
u\mapsto \int_{\mathbb{R}^{n}}|(-\mathcal{L})^{s/2}u|^{2}-\lambda
\int_{\Omega }u^{2}dx
\end{equation*}%
subject to
\begin{equation*}
\int_{\Omega }|u|^{\frac{2n}{n-2s}}dx=1\,.
\end{equation*}

However, fractional integrals of this type are generally difficult to be
handled directly. On the other hand, fractional powers of elliptic operators
in divergence form were recently described in \cite{ST} as
Dirichlet-to-Neumann maps for an extension problem in the spirit of the
extension problem for the fractional Laplace operator on $\mathbb{R}^{n}$ of
\cite{CaSi}. In section 2, we take some advantage of this description and
provide an equivalent variational formulation which will be used in our
proof of existence. We also present an existence tool and a regularity
result of weak solutions.

The existence of minimizers for the new constrained functional often relies
on the construction and estimates of suitable bubbles involving extremal
functions of Sobolev type inequalities. Although, this is a well known
strategy, new and important difficulties arise in present context. Indeed,
the most delicate part in the proof of Theorem \ref{T.1} (the interior case)
is caused by the term $\left\vert x-x_{0}\right\vert ^{\sigma }$ in the
inequality (\ref{H1}), which essentially involves estimates of the multiple
integral
\begin{equation*}
\int_{B_{R}(0)}\int_{0}^{\infty }|x|^{\sigma }y^{1-2s}|\nabla
_{x}w_{1}(x,y)|^{2}dydx
\end{equation*}%
on the whole ball of radius $R$ for $R>0$ large enough, where $w_{1}(x,y)$
is given by

\begin{equation*}
w_{1}(x,y):=c_{s} y^{2s}\int_{\mathbb{R}^{n}}\frac{u_{1}(\xi )}{(|x-\xi
|^{2}+y^{2})^{\frac{n+2s}{2}}}d\xi \,,
\end{equation*}
with $c_{s}$ being an appropriate normalization constant and

\begin{equation*}
u_{1}(\xi )=\frac{1}{(1+|\xi |^{2})^{\frac{n-2s}{2}}}\,.
\end{equation*}%
In Section 3, we estimate the integral mentioned above and, as a byproduct,
we prove Theorem \ref{T.1}. The bubbles used in the proof of Theorem 1 do
not work in the boundary case because we need to compare the least energy
level to the corresponding best trace constant in $\mathbb{R}_{+}^{n+1}$.
The idea for overcoming this difficulty is to consider suitable bubbles
concentrated in interior points converging fast to the boundary point $x_{0}$
in an appropriate way. The construction depends on the order $\alpha $ of
the singularity of the boundary at $x_{0}$. In Section 4 we introduce such
bubbles and derive the necessary estimates in the boundary case. Proof of
Theorem \ref{T.2} then follows. Finally, in Section 5, we establish a
Pohozaev identity for $C^{1}$ solutions of (\ref{fractionalbn}) and use it
to prove Theorem \ref{T.3}.

\section{The variational framework and main tools}

For the precise definition of the fractional power of the selfadjoint
elliptic operator $-\mathcal{L}$, we consider an orthonormal basis of $%
L^2(\Omega)$ consisting of eigenfunctions $\phi_k \in H^1_0(\Omega)$, $k =
1, 2, \cdots $, that correspond to eigenvalues $\lambda_1 < \lambda_2 \leq
\cdots$. The domain of $(-\mathcal{L})^s$, denoted here by $H^s$, $0 < s < 1$%
, is defined as the Hilbert space of functions $u = \sum_{k = 1}^\infty c_k
\phi_k \in L^2(\Omega)$ such that $\sum_{k = 1}^\infty \lambda_k^s c_k^2 <
\infty$ endowed with the inner product $\langle u, v \rangle_{H^s} :=
\sum_{k = 1}^\infty \lambda_k^s c_k d_k$, where $v = \sum_{k = 1}^\infty d_k
\phi_k$. For each $u = \sum_{k=1}^{\infty} c_{k} \phi_{k} \in H^s$, we
define $\left( -\mathcal{L}\right) ^{s}u=\sum_{k}c_{k}\lambda
_{k}^{s}\varphi_{k}$. With this definition, we have

\begin{equation*}
\langle u, v \rangle_{H^s} = \langle \left( -\mathcal{L}\right) ^{\frac{s}{2}%
} u, \left( -\mathcal{L}\right) ^{\frac{s}{2}} v \rangle_{L^{2}\left( \Omega
\right)}\, .
\end{equation*}

It is well known that (see details in \cite{CS, Li})
\begin{equation*}
H^s = \left\{
\begin{array}{lcll}
H^s(\Omega), &  & \mathrm{if} \ 0 < s < 1/2, &  \\
H_{00}^{1/2}(\Omega), &  & \mathrm{if} \ s = 1/2, &  \\
H_0^s(\Omega), &  & \mathrm{if} \ 1/2 < s < 1 &
\end{array}
\right.
\end{equation*}
The spaces $H^s(\Omega)$ and $H_0^s(\Omega)$, $s \neq 1/2$, are the
classical fractional Sobolev spaces given as the completion of $%
C_0^\infty(\Omega)$ under the norm
\begin{equation*}
\|u\|^2_{H^s(\Omega)} = \|u\|^2_{L^2(\Omega)} + [u]^2_{H^s(\Omega)}\, ,
\end{equation*}
where
\begin{equation*}
[u]^2_{H^s(\Omega)} = \int_\Omega \int_\Omega \frac{\left( u(x) - u(y)
\right)^2}{|x - y|^{n + 2s}} dx dy\, .
\end{equation*}
The space $H_{00}^{1/2}(\Omega)$ is the Lions-Magenes space which consists
of functions $u \in L^2(\Omega)$ such that $[u]^2_{H^{1/2}(\Omega)} < \infty$
and
\begin{equation*}
\int_{\Omega} \frac{u(x)^2}{\mathrm{dist}(x, \partial \Omega)} dx < \infty\,
.
\end{equation*}

The Hilbert space $H^s$ is compactly embedded in $L^q(\Omega)$ for any $1
\leq q < \frac{2n}{n-2s}$ and continuously in $L^{\frac{2n}{n-2s}}(\Omega)$
provided that $n > 2s$. So, a natural strategy to solve (\ref{fractionalbn})
in a weak sense consists in searching minimizers of

\begin{equation}  \label{ilambda}
I^{A}_\lambda \left( u\right) =\int_{\Omega }\left\vert \left( -\mathcal{L}%
\right) ^{\frac{s}{2}}u\right\vert ^{2} dx -\lambda \int_{\Omega }u^{2}dx
\end{equation}%
constrained to the Nehari manifold

\begin{equation*}
E=\left\{ u \in H^s: \int_{\Omega } |u|^{\frac{2n}{n-2s}}dx = 1\right\}\, .
\end{equation*}
In fact, the functional $I^{A}_\lambda$ is well defined on $H^s$ and its
least energy level on $E$, denoted by

\begin{equation*}
S^{A}_\lambda := \inf_{u \in E} I^{A}_\lambda \left( u\right)\, ,
\end{equation*}
is finite. Moreover, a minimizer $u \in H^s$ satisfies

\begin{equation}
\int_{\Omega }\left( -\mathcal{L}\right) ^{\frac{s}{2}}u\left( -\mathcal{L}%
\right) ^{\frac{s}{2}}\zeta dx=\int_{\Omega }\left( S^{A}_\lambda |u|^{\frac{%
4s}{n-2s}} u +\lambda u\right) \zeta dx  \label{integralform}
\end{equation}%
for every $\zeta \in H^{s}$. In particular, if $S^{A}_\lambda$ is positive
and $u$ is nonnegative in $\Omega$, then $u$ is a nonnegative weak solution
of (\ref{fractionalbn}). On the other hand, using the variational
characterization of the first Dirichlet eigenvalue of $(-\mathcal{L})^s$
given by

\begin{equation*}
\lambda_{1,s}\left(-\mathcal{L} \right) = \inf_{u \in H \setminus \{0\}}
\frac{\int_{\Omega }\left\vert \left( -\mathcal{L}\right) ^{\frac{s}{2}%
}u\right\vert ^{2} dx}{\int_{\Omega }u^{2}dx}\, ,
\end{equation*}
one can easily check that the positivity of $S^{A}_\lambda$ is equivalent to
$\lambda < \lambda_{1,s}\left(-\mathcal{L} \right)$. So, the conclusion of
Theorems \ref{T.1} and \ref{T.2} follows if we are able to prove the
existence and regularity of nonnegative minimizers of $I^{A}_\lambda$ in $E$
for any $\lambda > 0$.

Inspired by the recent work in \cite{ST}, an equivalent definition for the
operator $(- {\mathcal{L}})^{s}$ in $\Omega$ with zero Dirichlet boundary
condition can be formulated as an extension problem in a cylinder. Let $%
\mathcal{C}_{\Omega } =\Omega \times \left( 0,\infty \right) \subset \mathbb{%
R}_{+}^{n+1}$. We denote the points in $\mathcal{C}_{\Omega} $ by $z =
\left( x,y\right)$ with $x \in \Omega $ and the lateral boundary $\partial
\Omega \times \left[ 0,\infty \right)$ by $\partial _{L}\mathcal{C}_{\Omega }
$. Then for $u \in H^s$, we define the $(s,A)$-extension $w =
E_{s}^{A}\left( u\right)$ as the solution to the problem

\begin{equation}
\left\{
\begin{array}{rrrll}
\mathrm{div}\left( y^{1-2s}B\left( x\right) \nabla w\left( x,y\right) \right)
& = & 0 & \text{in} & \mathcal{C}_{\Omega }, \\
w & = & 0 & \text{on} & \partial _{L}\mathcal{C}_{\Omega }, \\
w & = & u & \text{on} & \Omega \times \left\{ y=0\right\} .%
\end{array}%
\right.  \label{extension w}
\end{equation}%
Here $B\left( x\right) =\left(
\begin{array}{cc}
A\left( x\right) & 0 \\
0 & 1%
\end{array}%
\right) $ is an $\left( n+1\right) \times \left( n+1\right) $ matrix. The
extension function belongs to the space%
\begin{equation*}
H_{0, L}^{s,A}\left( \mathcal{C}_{\Omega }\right) =\overline{C_{0}^{\infty
}\left( \Omega \times \left[ 0,\infty \right) \right) }^{\left\Vert \cdot
\right\Vert _{H_{0,L}^{s,A} }}
\end{equation*}%
with
\begin{equation*}
\left\Vert w\right\Vert _{H_{0,L}^{s,A} } = \left( c_{s}\iint_{\mathcal{C}%
_{\Omega }}y^{1-2s}\left( \nabla w\right) ^{T}B\left( x\right) \nabla
w\;dxdy\right) ^{1/2}.
\end{equation*}%
Here $c_{s}$ is a normalization constant such that $E_{s}^{A}:H^{s}\left(
\Omega \right) \rightarrow H_{0,L}^{s,A}\left( \mathcal{C}_{\Omega }\right) $
is an isometry between Hilbert spaces. In particular,
\begin{equation*}
\left\Vert E_{s}^{A}u\right\Vert _{H_{0,L}^{s,A} }=\left\Vert u\right\Vert
_{H^{s}}
\end{equation*}%
for every $u \in H^{s}$.

For $A\left( x\right) = I_n$, $E_{s}^{A}(u)$ is the canonical $s$-harmonic
extension of $u$, see \cite{CaSi}. In this case, we denote $E_{s}^{A}(u)$ by
$E_{s}\left( u\right) $ and the extension function space $%
H_{0,L}^{s,A}\left( \mathcal{C}_{\Omega }\right) $ is denoted by $%
H_{0,L}^{s}\left( \mathcal{C}_{\Omega }\right) $.

It is known from  \cite{ST} that the extension function $w$ satisfies
\begin{equation}
-c_{s}\lim_{y\rightarrow 0^{+}}y^{1-2s}\frac{\partial w}{\partial y}\left(
x,y\right) =\left( -\mathcal{L}\right) ^{s}u(x)  \label{neumanw}
\end{equation}%
for every $x\in \Omega $.

Using the extension map $E_{s}^{A}$, we can reformulate the problem $\left( %
\ref{fractionalbn}\right) $ as
\begin{equation}
\left\{
\begin{array}{rrlll}
-\mathrm{div}\left( y^{1-2s}B\left( x\right) \nabla w\right) & = & 0 & \text{%
in} & \mathcal{C}_{\Omega ,} \\
w & = & 0 & \text{on} & \partial \mathcal{C}_{\Omega }, \\
\partial^s_\nu w & = & w^{\frac{n+2s}{n-2s}}+\lambda w & \text{in} & \Omega
\times \left\{ y=0\right\} .%
\end{array}%
\right.  \label{fractionalbnw}
\end{equation}%
Here
\begin{equation*}
\partial^s_\nu w :=-c_{s}\lim_{y\rightarrow 0^{+}}y^{1-2s}\frac{\partial w}{%
\partial y}.
\end{equation*}

Using that the trace of functions in $H_{0,L}^{s,A}\left( \mathcal{C}%
_{\Omega }\right) $ is compactly embedded in $L^{q}(\Omega )$ for any $1\leq
q<\frac{2n}{n-2s}$ and continuously in $L^{\frac{2n}{n-2s}}(\Omega )$ for $%
n>2s$, we can consider the minimization of functional

\begin{equation}
J^{A}_\lambda \left( w\right) = c_{s} \iint_{\mathcal{C}_{\Omega
}}y^{1-2s}\left( \nabla w\right) ^{T}B\left( x\right) \nabla w\; dxdy-
\lambda \int_{\Omega }w^{2}\left( x,0\right) dx  \label{wenergy}
\end{equation}%
in the admissible set

\begin{equation*}
F=\left\{ w \in H_{0,L}^{s,A}\left( \mathcal{C}_{\Omega }\right):
\int_{\Omega } |w(x, 0)|^{\frac{2n}{n-2s}}dx = 1\right\}\, .
\end{equation*}
Clearly, we have

\begin{equation*}
S^{A}_\lambda = \inf_{w \in F} J^{A}_\lambda \left( w\right)\, .
\end{equation*}
Moreover, $u \in E$ is a minimizer of $I^{A}_\lambda$ on $E$ if, and only
if, $w = E_{s}^{A}(u) \in F$ is a minimizer of $J^{A}_\lambda$ on $F$.

There are essentially two advantages in considering the minimization problem
for $J_{\lambda}^A$. Firstly, it follows directly that $|w| \in F$ and $%
J^{A}_\lambda(|w|) = J^{A}_\lambda(w)$ for every $w \in F$, so that
minimizers of $J^{A}_\lambda$ on $F$ can be assumed nonnegative in $\Omega$.
Secondly, the integral
\begin{equation*}
\iint_{\mathcal{C}_{\Omega }}y^{1-2s}\left( \nabla w\right) ^{T}B\left(
x\right) \nabla w\; dxdy
\end{equation*}
is more easily to be handled comparing to the one in (\ref{ilambda}).
Therefore, from now on we will concentrate on the existence of minimizers of
$J^{A}_\lambda$ in $F$.

{ One of the tool used in our existence proof is the following
trace inequality
\begin{equation}
\left( \int_{\Omega }\left\vert f\left( x,0\right) \right\vert ^{r}
dx\right) ^{\frac{2}{r}} \leq A \int_{\mathcal{C}_{\Omega
}}y^{1-2s}\left\vert \nabla f\left( x,y\right) \right\vert ^{2}dxdy
\label{soboinequality}
\end{equation}%
for $1\leq r\leq \frac{2n}{n-2s},$ $n>2s$ and every $f\in H_{0}^{s}\left(
\mathcal{C}_{\Omega }\right)$. When $r=\frac{2n}{n-2s},$ we denote the best
constant in $\left( \ref{soboinequality}\right) $ by $K_{s}\left( n\right) .$
This constant is not achieved in bounded domain and achieved when $\Omega =%
\mathbb{R}^{n}$ and $f=E_{s}\left( u\right)$ with
\begin{equation}  \label{u tilde}
u(x)=\frac{\varepsilon ^{\frac{n-2s}{2}}}{\left( \left\vert x\right\vert
^{2}+\varepsilon ^{2}\right) ^{\frac{n-2s}{2}}} .
\end{equation}%
By a change of variable argument, we have
\begin{equation}
\left( \int_{\Omega }\left\vert f\left( x,0\right) dx\right\vert ^{\frac{2n}{%
n-2s}}\right) ^{\frac{n-2s}{n}}  \label{genesoboinequality}
\end{equation}%
}

\begin{equation*}
\leq \det \left( A\left( x_{0}\right) \right) ^{-\frac{s}{n}} K_{s}\left(
n\right) \int_{\mathcal{C}_{\Omega }}y^{1-2s}\left( \nabla f\left(
x,y\right) \right) ^{T}B\left( x_{0}\right) \nabla f\left( x,y\right) dxdy
\end{equation*}
for every $f\in H_{0}^{s}\left( \mathcal{C}_{\Omega }\right) .$ 

The following proposition states a necessary condition for existence of
minimizer of $I_{\lambda}^A$ in $E$.

\begin{proposition}
\label{existence tool} Let $n > 2s$. Assume there exists a point $x_0 \in
\overline{\Omega}$ such that (\ref{H2}) is satisfied and

\begin{equation}
S_{\lambda }^{A} < c_{s}\det \left( A\left( x_{0}\right) \right)^{\frac{s}{n}%
} K_s(n)^{-1}.  \label{S upperbound}
\end{equation}
Then, the infimum $S_{\lambda }^{A}$ of $I^A_\lambda$ in $E$ is achieved by
some nonnegative function $u$. Furthermore, if $\lambda <
\lambda_{1,s}\left(-\mathcal{L} \right)$, then $u$ is a nonnegative weak
solution of \eqref{fractionalbn}, module a suitable scaling.
\end{proposition}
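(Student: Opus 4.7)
The plan is to solve the equivalent problem of minimizing $J_\lambda^A$ on $F$, since the extension formulation admits nonnegative minimizing sequences and the quadratic form splits conveniently. I would take $\{w_k\}\subset F$ with $w_k\geq 0$ and $J_\lambda^A(w_k)\to S_\lambda^A$. The constraint together with H\"older gives $\int_\Omega w_k(\cdot,0)^2\,dx\leq |\Omega|^{2s/n}$, so the uniform ellipticity of $B(x)$ and boundedness of $J_\lambda^A(w_k)$ bound $\{w_k\}$ in $H_{0,L}^{s,A}(\mathcal{C}_\Omega)$. Passing to a subsequence, $w_k\rightharpoonup w$ weakly and $w_k(\cdot,0)\to w(\cdot,0)$ strongly in $L^q(\Omega)$ for every $q<\tfrac{2n}{n-2s}$, in particular in $L^2$. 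The target is to use the strict inequality (\ref{S upperbound}) to prevent any loss of $L^{2n/(n-2s)}$-mass.

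Set $v_k:=w_k-w$, so $v_k\rightharpoonup 0$. By a Brezis--Lieb decomposition applied to both the quadratic form with weight $B(x)$ and the critical norm, together with the strong $L^2$-convergence of traces which annihilates cross terms and the $\lambda$-part on $v_k$, one obtains
\begin{equation*}
J_\lambda^A(w_k)=J_\lambda^A(w)+c_s\iint_{\mathcal{C}_\Omega}y^{1-2s}(\nabla v_k)^T B(x)\nabla v_k\,dxdy+o(1),\qquad 1=a+b_k+o(1),
\end{equation*}
where $a=\int_\Omega|w(\cdot,0)|^{\frac{2n}{n-2s}}dx$ and $b_k=\int_\Omega|v_k(\cdot,0)|^{\frac{2n}{n-2s}}dx\to b$. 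Two lower bounds will be crucial. When $a>0$, rescaling $w/a^{\frac{n-2s}{2n}}\in F$ gives $J_\lambda^A(w)\geq a^{\frac{n-2s}{n}}S_\lambda^A$. Applying the sharp trace inequality (\ref{genesoboinequality}) to $v_k$ and using (\ref{H2}) in the form $B(x)\geq B(x_0)$ as bilinear forms produces
\begin{equation*}
c_s\iint_{\mathcal{C}_\Omega}y^{1-2s}(\nabla v_k)^T B(x)\nabla v_k\,dxdy\;\geq\;\kappa\, b_k^{\frac{n-2s}{n}},\qquad \kappa:=c_s\det(A(x_0))^{\frac{s}{n}}K_s(n)^{-1}.
\end{equation*}

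Passing to the limit yields $S_\lambda^A\geq a^{\frac{n-2s}{n}}S_\lambda^A+\kappa\, b^{\frac{n-2s}{n}}$ with $a+b=1$. Since $p:=\tfrac{n-2s}{n}\in(0,1)$ and $a,b\in[0,1]$, the elementary bound $a^p+b^p\geq a+b=1$ rearranges the estimate to $\kappa\, b^p\leq S_\lambda^A\, b^p$, so any $b>0$ would force $\kappa\leq S_\lambda^A$ (the sign of $S_\lambda^A$ is irrelevant here, as one checks directly in the cases $S_\lambda^A\leq0$ and $S_\lambda^A>0$), contradicting (\ref{S upperbound}). Thus $b=0$, $a=1$, $w\in F$, and weak lower semicontinuity gives $J_\lambda^A(w)\leq\liminf J_\lambda^A(w_k)=S_\lambda^A$, so $w$ is a minimizer; its trace $u:=w(\cdot,0)\geq 0$ realizes $S_\lambda^A$ on $E$. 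When $\lambda<\lambda_{1,s}(-\mathcal{L})$ we have $S_\lambda^A>0$, and the Euler--Lagrange identity (\ref{integralform}) shows that $\tilde u:=(S_\lambda^A)^{\frac{n-2s}{4s}}u$ is the desired nonnegative weak solution of (\ref{fractionalbn}). The main obstacle is precisely the concentration step: one must ensure that a mass loss $b>0$, which would correspond to a critical bubble escaping to infinity or concentrating at $x_0$, is ruled out, and this is exactly where the pointwise domination $A(x)\geq A(x_0)$ is invoked to reduce to the sharp trace constant $\kappa$, combined with the hypothesis $S_\lambda^A<\kappa$.
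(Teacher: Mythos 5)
Your proof is correct and follows essentially the same strategy as the paper's: pass to the extension formulation, take a minimizing sequence in $F$, split the energy via weak convergence and Brezis--Lieb, bound the remainder term from below by $c_s\det(A(x_0))^{s/n}K_s(n)^{-1}$ using (\ref{H2}) and the sharp trace inequality, bound $J_\lambda^A(w)$ from below by $a^{(n-2s)/n}S_\lambda^A$ via Nehari rescaling, and use the subadditivity of $t\mapsto t^{(n-2s)/n}$ together with $S_\lambda^A<\kappa$ to force $b=0$. The only cosmetic differences are that you assume nonnegativity of the minimizing sequence up front (the paper replaces $w$ by $|w|$ at the end) and that you make the sign discussion for $S_\lambda^A$ and the case $a=0$ explicit, both of which are implicitly handled in the paper's version of the same inequality.
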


\begin{proof}
As  noted above, it suffices to prove that the infimum of $J_{\lambda }^{A}$
in $F$, given also by $S_{\lambda }^{A}$, is assumed by some nonnegative
function $w$.

Let $\left\{ w_{m}\right\} \subset H_{0,L}^{s,A}\left( \mathcal{C}_{\Omega
}\right) $ be a minimizing sequence of $J_{\lambda }^{A}$ on $F$. Clearly, $%
\left\{ w_{m}\right\} $ is bounded in $H_{0,L}^{s,A}\left( \mathcal{C}%
_{\Omega }\right) $. Siince $\Omega $ is bounded, up to a subsequence, we
have

\begin{eqnarray*}
&&w_{m}\rightharpoonup w\text{ weakly in }H_{0,L}^{s,A}\left( \mathcal{C}%
_{\Omega }\right) , \\
&&w_{m}\left( \cdot ,0\right) \rightarrow w\left( \cdot .0\right) \text{
strongly in }L^{q}\left( \Omega \right) \text{ for }1\leq q<\frac{2n}{n-2s},
\\
&&w_{m}\left( \cdot ,0\right) \rightarrow w\left( \cdot ,0\right) \text{
a.e. in }\Omega .
\end{eqnarray*}%
A direct calculation, taking into account of the weak convergence, gives
\begin{eqnarray*}
\left\Vert w_{m}\right\Vert _{H_{0,L}^{s,A}}^{2} &=&\left\Vert
w_{m}-w\right\Vert _{H_{0,L}^{s,A}}^{2}+\left\Vert w\right\Vert
_{H_{0,L}^{s,A}}^{2} \\
&&+c_{s}\iint_{\mathcal{C}_{\Omega }}y^{1-2s}\left( \nabla w^{T}B\left(
x\right) \nabla \left( w_{m}-w\right) +\nabla ^{T}\left( w_{m}-w\right)
B\left( x\right) \nabla w\right) \;dxdy \\
&=&\left\Vert w_{m}-w\right\Vert _{H_{0,L}^{s,A}}^{2}+\left\Vert
w\right\Vert _{H_{0,L}^{s,A}}^{2}+o\left( 1\right) .
\end{eqnarray*}%
Now using Brezis-Lieb Lemma (see \cite{BrLi}), (\ref{H2}) and (\ref%
{soboinequality}), we obtain
\begin{eqnarray*}
S_{\lambda }^{A} &=&\left\Vert w_{m}\right\Vert _{H_{0,L}^{s,A}}^{2}-\lambda
\left\Vert w_{m}\left( \cdot ,0\right) \right\Vert _{L^{2}}^{2}+o(1) \\
&=&\left\Vert w_{m}-w\right\Vert _{H_{0,L}^{s,A}}^{2}+\left\Vert
w\right\Vert _{H_{0,L}^{s,A}}^{2}-\lambda \left\Vert w\left( \cdot ,0\right)
\right\Vert _{L^{2}}^{2}+o\left( 1\right)  \\
&\geq &c_{s}\iint_{\mathcal{C}_{\Omega }}y^{1-2s}\left( \nabla
(w_{m}-w)\right) ^{T}B\left( x_{0}\right) \nabla (w_{m}-w)\;dxdy+J_{\lambda
}^{A}(w)+o(1) \\
&\geq &c_{s}\det \left( A\left( x_{0}\right) \right) ^{\frac{s}{n}%
}K_{s}\left( n\right) ^{-1}\left\Vert \left( w_{m}-w\right) \left( \cdot
,0\right) \right\Vert _{L^{\frac{2n}{n-2s}}}^{2}+J_{\lambda }^{A}(w)+o\left(
1\right)  \\
&\geq &c_{s}\det \left( A\left( x_{0}\right) \right) ^{\frac{s}{n}%
}K_{s}\left( n\right) ^{-1}\left( 1-\int_{\Omega }|w(x,0)|^{\frac{2n}{n-2s}%
}dx\right) ^{\frac{n-2s}{n}} \\
&&+S_{\lambda }^{A}\left( \int_{\Omega }|w(x,0)|^{\frac{2n}{n-2s}}dx\right)
^{\frac{n-2s}{n}}+o(1)\,
\end{eqnarray*}%
so that

\begin{equation*}
S^A_\lambda \left( 1 - \left(\int_{\Omega } |w(x, 0)|^{\frac{2n}{n-2s}}dx
\right)^{\frac{n-2s}{n}}\right) \geq c_{s} \det \left( A\left( x_{0}\right)
\right)^{\frac{s}{n}} K_s(n)^{-1} \left( 1 - \int_{\Omega } |w(x, 0)|^{\frac{%
2n}{n-2s}}dx\right)^{\frac{n-2s}{n}}\, .
\end{equation*}
So, using the assumption (\ref{S upperbound}) and the fact that

\begin{equation*}
\int_{\Omega } |w(x, 0)|^{\frac{2n}{n-2s}}dx \leq 1\, ,
\end{equation*}
we derive

\begin{equation*}
\int_{\Omega } |w(x, 0)|^{\frac{2n}{n-2s}}dx = 1\, .
\end{equation*}
But this implies that

\begin{equation*}
w_{m}\left( \cdot ,0\right) \rightarrow w\left( \cdot .0\right) \text{
strongly in }L^{\frac{2n}{n-2s}}\left( \Omega \right) .
\end{equation*}%
Then, $w \in F$ and by lower semicontinuity of $J^A_\lambda$,
\begin{equation*}
J_{\lambda }^{A}\left( w\right) \leq \lim \inf_{m\rightarrow \infty
}J_{\lambda }^{A}\left( w_{m}\right) \, ,
\end{equation*}%
so that $J_{\lambda }^{A}\left( w\right) = S_{\lambda}^{A}$. Therefore, $|w|$
is a nonnegative minimizer of $J_{\lambda }^{A}$ in $F$.

The remainder of the proof is direct because $u = |w|(\cdot, 0)$ is a
nonnegative minimizer of $I_{\lambda }^{A}$ on $E$ and the inequality $%
\lambda < \lambda_{1,s}\left(-\mathcal{L} \right)$ is equivalent to the
positivity of $S_{\lambda }^{A}$.
\end{proof}

The discussion made so far about the existence of nonnegative weak solutions
of \eqref{fractionalbn} can be resumed in the following remark:

\begin{remark}
\label{R} Proposition \ref{existence tool} provides the existence of a
nonnegative weak solution of \eqref{fractionalbn} by assuming the conditions
(\ref{H2}), (\ref{S upperbound}) and $\lambda < \lambda_{1,s}\left(-\mathcal{%
L} \right)$. Up to a scaling this solution can be seen as the trace of a
nonnegative minimizer of $Q_{\lambda }^{A}$ on $H_{0,L}^{s,A}\left( \mathcal{%
C}_{\Omega }\right) \setminus \{0\}$, where $Q_{\lambda }^{A}$ denotes the
Rayleigh quotient
\begin{equation*}
Q_{\lambda }^{A}\left( w\right) =\frac{\left\Vert w\right\Vert
_{H_{0,L}^{s,A} }^{2}-\lambda \left\Vert w\left( \cdot,0\right) \right\Vert
_{L^{2} }^{2}}{\left\Vert w\left( \cdot,0\right) \right\Vert _{L^{\frac{2n}{%
n-2s}} }^{2}}\, .
\end{equation*}
Note that (\ref{S upperbound}) is equivalent to existence of a function $w_0
\in H_{0,L}^{s,A}\left( \mathcal{C}_{\Omega }\right) \setminus \{0\}$ so that

\begin{equation}
Q_{\lambda }^{A}\left( w_{0}\right) <c_{s}\det \left( A\left( x_{0}\right)
\right) ^{\frac{s}{n}}K_{s}\left( n\right) ^{-1}.  \label{Q upperbound}
\end{equation}%
So, in light of Proposition \ref{existence tool}, Sections 3 and 4 are
dedicated to the construction of $w_{0}$ by using the remaining assumptions
assumed in Theorems \ref{T.1} and \ref{T.2}, respectively.
\end{remark}

The next proposition shows further regularity of weak solution of %
\eqref{fractionalbn}. {}

\begin{proposition}
\label{regularity} Let $u\in H^{s}\setminus \{0\}$ be a nonnegative weak
solution of \eqref{fractionalbn}. Assume that $A(x)$ is a positive definite
symmetric matrix for each $x\in \overline{\Omega }$ with continuous entries
on $\overline{\Omega }$. Then $u\in L^{p}(\Omega )$ for every $p\geq 1$.
Furthermore, if $\partial \Omega $ is of $C^{1,1}$ class and each entry of $%
A(x)\in C^{1}\left( \overline{\Omega }\right) $ then $u$ belongs to $%
C^{0,\alpha }(\overline{\Omega })$ if $0<s<1/2$ and to $C^{1,\alpha }(%
\overline{\Omega })$ if $1/2\leq s<1$ for any $0<\alpha <1$.
\end{proposition}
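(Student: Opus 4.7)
\emph{Proof plan.} The strategy has two stages: first bootstrap integrability to $u\in L^p(\Omega)$ for every $p\geq 1$, then transfer the information to the extended problem \eqref{fractionalbnw} and invoke regularity theory for degenerate elliptic equations with $A_2$-Muckenhoupt weight $y^{1-2s}$.

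\emph{Step 1: $L^p$ estimates via a Brezis--Kato argument.} Rewrite the equation as $(-\mathcal{L})^s u = V(x) u$ with $V(x) := u(x)^{4s/(n-2s)}+\lambda$. Because $u\in H^s\hookrightarrow L^{2n/(n-2s)}(\Omega)$, one has $V\in L^{n/(2s)}(\Omega)$, which is precisely the borderline space for the Hardy--Sobolev trace inequality \eqref{soboinequality}. For each $M>0$ and $\beta\geq 0$ I would plug the truncated test function $\zeta=u\min(u,M)^{2\beta}$ into the weak formulation \eqref{integralform} (with the equivalent extended formulation so that one actually tests against $w\min(w,M)^{2\beta}$ on the cylinder), and use \eqref{soboinequality} on the left-hand side. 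Splitting $V=V_1+V_2$ with $\|V_2\|_{L^{n/(2s)}}<\varepsilon$ and $V_1\in L^\infty$, the critical term produced by $V_2$ can be absorbed on the left for $\varepsilon$ small. Letting $M\to\infty$ by monotone convergence and iterating on $\beta$ yields $u\in L^q(\Omega)$ for every $q<\infty$; in particular $f:=u^{(n+2s)/(n-2s)}+\lambda u\in L^p(\Omega)$ for every $p\geq 1$.

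\emph{Step 2: H\"older and $C^{1,\alpha}$ regularity.} Let $w=E_s^A(u)$ solve \eqref{fractionalbnw}. The interior equation $\mathrm{div}(y^{1-2s}B(x)\nabla w)=0$ is uniformly degenerate elliptic with respect to the $A_2$-weight $y^{1-2s}$, and by Step 1 the Neumann datum $\partial_\nu^s w=f$ is bounded. The De Giorgi--Nash--Moser theory of Fabes--Kenig--Serapioni, extended up to the flat portion $\{y=0\}\cap\Omega$ by Caffarelli--Silvestre \cite{CaSi} and Stinga--Torrea \cite{ST}, gives $w\in C^{0,\alpha}_{\mathrm{loc}}$ in $\overline{\mathcal{C}_\Omega}\setminus\partial_L\mathcal{C}_\Omega$. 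To reach the lateral boundary, I would use the assumptions $\partial\Omega\in C^{1,1}$ and $a_{ij}\in C^1(\overline{\Omega})$ to locally flatten $\partial\Omega$ via a diffeomorphism; the transformed equation retains the weighted structure with $C^0$ coefficients, and $w$ vanishes on the straightened lateral piece, so the same weighted regularity theory combined with a standard reflection/barrier argument yields $w\in C^{0,\alpha}(\overline{\mathcal{C}_\Omega\cap K})$ for every compact $K$. Restricting to $y=0$ gives $u\in C^{0,\alpha}(\overline{\Omega})$ for $0<s<1$. When $s\geq 1/2$ the weight $y^{1-2s}$ is locally bounded at $y=0$, so the once we have continuous Neumann data, weighted Schauder estimates (as in Stinga--Torrea and Jin--Li--Xiong) allow a further bootstrap producing $w\in C^{1,\alpha}$ up to the flat and (after flattening) the lateral pieces, hence $u\in C^{1,\alpha}(\overline{\Omega})$.

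\emph{Main obstacle.} The delicate point is Step 1: since $V$ lies exactly in the critical space $L^{n/(2s)}(\Omega)$, a naive Moser iteration fails and one must carefully absorb the singular part of $V$ via the splitting trick and check that the truncated test functions $\zeta=u\min(u,M)^{2\beta}$ are admissible in $H^s$ (which requires a density argument exploiting the equivalent norm on $H_{0,L}^{s,A}(\mathcal{C}_\Omega)$). Once $L^p$ integrability for all finite $p$ is established, Step 2 is essentially an invocation of known weighted regularity theory, the only additional care being the flattening of $\partial\Omega$ and the verification that the pulled-back matrix $B(x)$ preserves uniform ellipticity and the $C^1$ regularity required for the Schauder step in the case $s\geq 1/2$.
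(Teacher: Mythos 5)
Your proposal follows essentially the same route as the paper: Step~1 is the truncated Moser/Brezis--Kato iteration applied to the extended problem \eqref{fractionalbnw} using the test function $w\min(w,M)^{2\beta}$, where your splitting $V=V_1+V_2$ with $\|V_2\|_{L^{n/(2s)}}$ small is the same absorption trick the paper implements via the domain decomposition $\Omega=(\Omega\setminus\Omega_m)\cup\Omega_m$ with $\Omega_m=\{u>m\}$ and $h=|f(u)|/(1+u)$; Step~2 is exactly the paper's citation of Caffarelli--Stinga (Theorems 1.1, 1.2, 1.3, 1.5) for the weighted boundary regularity that you sketch via Fabes--Kenig--Serapioni plus flattening.
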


\begin{proof}
The function $w=E_{s}^{A}(u)$ satisfies \eqref{fractionalbnw}. Since $u$ is
assumed to be nonnegative, $w$ is also nonnegative. For each $k\geq 1$, we
define $w_{k}$ by
\begin{equation*}
w_{k}(x,y):=\min \{w(x,y),k\}.
\end{equation*}%
Since $ww_{k}^{2\beta }$ is in $H_{0,L}^{s,A}(\mathcal{C}_{\Omega })$ for
all $\beta \geq 0$, using it as a test function in \eqref{fractionalbnw}, we
obtain
\begin{eqnarray}
&& \int_{\Omega } f(u)uu_{k}^{2\beta }\;dx  \label{times test} \\
&{=}& \iint_{\mathcal{C}_{\Omega }}y^{1-2s}(\nabla
w)^{T}B(x)\nabla (ww_{k}^{2\beta })\;dxdy  \notag \\
&=& \iint_{\mathcal{C}_{\Omega }}y^{1-2s}w_{k}^{2\beta }(\nabla w)^{T}B(x)
\nabla w +2\beta \;y^{1-2s}w_{k}^{2\beta }(\nabla w_{k})^{T}B(x) \nabla
w_{k}\;dxdy ,  \notag
\end{eqnarray}%
where $f(u) := u^{\frac{n + 2s}{n - 2s}} + \lambda u$. Note that the last
equality comes from the fact the $w=w_{k}$ in the set where $w\leq k$ and $%
w_{k}$ is a constant otherwise.

\noindent On the other hand, we have

\begin{eqnarray}
&&\iint_{\mathcal{C}_{\Omega }}y^{1-2s}(\nabla (ww_{k}^{\beta
}))^{T}B(x)\nabla (ww_{k}^{\beta })\;dxdy  \label{test2} \\
&=&\iint_{\mathcal{C}_{\Omega }}y^{1-2s}w_{k}^{2\beta }(\nabla
w)^{T}B(x)\nabla w+(2\beta +\beta ^{2})y^{1-2s}w_{k}^{2\beta }(\nabla
w_{k})^{T}B(x)\nabla w_{k}\;dxdy.  \notag
\end{eqnarray}%
Combining $\left( \ref{times test}\right) $ and $\left( \ref{test2}\right) $%
, we derive
\begin{equation}
\iint_{\mathcal{C}_{\Omega }}y^{1-2s}|\nabla (ww_{k}^{\beta
})|^{2}\;dxdy\leq C_{1}\int_{\Omega }|f(u)|uu_{k}^{2\beta }\;dx
\label{test ineq}
\end{equation}%
for some constant $C_{1}>0$ which depends only on $\beta $ and the matrix $A$%
. Let $h=|f(u)|/(1+u)$ and $\Omega _{m}=\{x\in \Omega :u(x)>m\}$.

Since  $h\in L^{\frac{n}{2s}}(\Omega )$, there exists $m\in \mathbb{N}$
large enough such that
\begin{equation*}
\left( \int_{\Omega _{m}}|h|^{\frac{n}{2s}}\;dx\right) ^{\frac{2s}{n}}\leq
\frac{K_{s}(n)}{4C_{1}},
\end{equation*}%
where $K_{s}(n)$ is the best constant with respect to the embedding $%
H_{0,L}^{s}(\mathcal{C}_{\Omega })\hookrightarrow L^{\frac{2n}{n-2s}}(\Omega
)$. Since $u$ is bounded on $\Omega \setminus \Omega _{m}$, there exists a
constant $C_{2}>0$ which depends only on $m$ and $\Omega $ such that
\begin{align}
\int_{\Omega }|f(u)|uu_{k}^{2\beta }\;dx=& \int_{\Omega \setminus \Omega
_{m}}|f(u)|uu_{k}^{2\beta }\;dx+\int_{\Omega _{m}}|f(u)|uu_{k}^{2\beta }\;dx
\notag \\
\leq & C_{2}+2\int_{\Omega _{m}}hu^{2}u_{k}^{2\beta }\;dx  \label{bd f}
\end{align}%
Using $\left( \ref{bd f}\right) $, we deduce that
\begin{align*}
\int_{\Omega }|f(u)|uu_{k}^{2\beta }\;dx\leq & C_{2}+2\left( \int_{\Omega
_{m}}h^{\frac{n}{2s}}\;dx\right) ^{\frac{2s}{n}}\left( \int_{\Omega
_{m}}(uu_{k}^{\beta })^{\frac{2n}{n-2s}}\;dx\right) ^{\frac{n-2s}{n}} \\
\leq & C_{2}+\frac{1}{2C_{1}}\iint_{\mathcal{C}_{\Omega }}y^{1-2s}|\nabla
(ww_{k}^{\beta })|^{2}\;dxdy.
\end{align*}%
Plugging this in $\left( \ref{test ineq}\right) $ and taking $k\rightarrow
\infty $ we have $u^{\beta +1}\in L^{\frac{2n}{n-2s}}(\Omega )$ for all $%
\beta \geq 0$. Thus $f(u)\in L^{p}(\Omega )$ for every $p\geq 1$ and the
rest of the proof follows from Theorems 1.1, 1.2, 1.3 and 1.5 of \cite{CS}.
\end{proof}

\section{Proof of Theorem \protect\ref{T.1} - Interior case}

Throughout this section, we assume $x_{0}$ is an interior point of $\Omega$.
According to Remark \ref{R} and Proposition \ref{regularity} of the previous
section, our main task in this section is to prove the following proposition:

\begin{proposition}
\label{laststep1} Assume (\ref{H1}) for some $x_0 \in \Omega$. If $n \geq 4s$
and $\sigma > 2s$, then for any $\lambda > 0$ there exists $w_0 \in
H_{0,L}^{s,A}\left( \mathcal{C}_{\Omega }\right) \setminus \{0\}$ such that

\begin{equation*}
Q_{\lambda }^{A}\left( w_{0}\right) <c_{s}\det \left( A\left( x_{0}\right)
\right) ^{\frac{s}{n}}K_{s}\left( n\right) ^{-1}.
\end{equation*}
\end{proposition}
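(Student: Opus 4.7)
The plan is to construct a concentrating test function at $x_0$ and show that for a small enough concentration parameter its Rayleigh quotient drops below the critical value. Without loss of generality I would set $x_0 = 0$ and fix $R > 0$ with $B_{2R}(0)\subset \Omega$. To absorb the anisotropy at $x_0$, choose a linear map $T$ with $T^T T = A(0)^{-1}$, so that the change of variables $\xi = Tx$ turns $(\nabla v)^T B(0)\nabla v$ into $|\nabla v|^2$ with Jacobian $\det A(0)^{1/2}$. Let $u_1(\xi) = (1+|\xi|^2)^{-(n-2s)/2}$, let $w_1$ be its $s$-harmonic extension to $\mathbb{R}^{n+1}_+$ (the extremizer of the trace inequality with constant $K_s(n)^{-1}$), and set $w_\varepsilon(\xi, y) = \varepsilon^{-(n-2s)/2} w_1(\xi/\varepsilon, y/\varepsilon)$. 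With a cutoff $\eta \in C_c^\infty(B_{2R})$ equal to $1$ on $B_R$, the test function will be
$$W_\varepsilon(x,y) := \eta(x)\, w_\varepsilon(Tx, y) \in H_{0,L}^{s,A}(\mathcal{C}_\Omega).$$

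Next, I would split $B(x) = B(0) + (B(x) - B(0))$. By (\ref{H1}), on the support of $W_\varepsilon$ one has $B(x) - B(0) \le C_0 |x|^\sigma I_{n+1}$ as bilinear forms, hence
$$c_s\iint y^{1-2s}(\nabla W_\varepsilon)^T B(x)\nabla W_\varepsilon\,dx\,dy \le \mathcal{D}_0 + \mathcal{E}_A,$$
where $\mathcal{D}_0 = c_s\iint y^{1-2s}(\nabla W_\varepsilon)^T B(0)\nabla W_\varepsilon$ and $\mathcal{E}_A \le C_0 c_s\iint y^{1-2s}|x|^\sigma|\nabla W_\varepsilon|^2$. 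Applying $\xi = Tx$ together with the standard Brezis--Nirenberg cutoff expansion, $\mathcal{D}_0 = c_s\det A(0)^{s/n} K_s(n)^{-1}\|W_\varepsilon(\cdot,0)\|_{L^{2n/(n-2s)}}^2 + O(\varepsilon^{n-2s})$, the usual rescaling of the Aubin--Talenti $L^2$-mass yields $\lambda\int W_\varepsilon(x,0)^2\,dx \asymp \lambda\varepsilon^{2s}$ for $n > 4s$ and $\lambda\varepsilon^{2s}|\log\varepsilon|$ for $n = 4s$, and the denominator $\|W_\varepsilon(\cdot,0)\|_{L^{2n/(n-2s)}}^2$ differs from an $\varepsilon$-independent constant by $O(\varepsilon^n)$.

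The heart of the argument, and the main expected obstacle, is the control of $\mathcal{E}_A$. After rescaling $x = \varepsilon\xi, y = \varepsilon\eta$ this reduces to
$$C\,\varepsilon^\sigma \int_{B_\rho}\int_0^\infty \eta^{1-2s}|\xi|^\sigma|\nabla w_1(\xi,\eta)|^2\,d\eta\,d\xi,\qquad \rho\sim 1/\varepsilon,$$
which is exactly the delicate integral highlighted in the introduction. Using the decay $|\nabla w_1(\xi,\eta)|^2\lesssim (|\xi|^2+\eta^2)^{-(n-2s+1)}$ at infinity (which I would verify directly from the Poisson-type representation of $w_1$), integrating out $\eta$ via $\eta = |\xi|t$ gives an effective radial density $\sim |\xi|^{\sigma - 2n + 2s}$, so radial integration over $B_\rho$ produces a bound of $O(1)$, $O(\log\rho)$ or $O(\rho^{\sigma - (n-2s)})$ according as $\sigma < n-2s$, $\sigma = n-2s$ or $\sigma > n-2s$. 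Translating back, $\mathcal{E}_A = O(\varepsilon^{\min(\sigma, n-2s)})$, with a logarithm at equality. The hypotheses $\sigma > 2s$ and $n \ge 4s$ then make $\mathcal{E}_A$ negligible against the $\lambda$-mass contribution: $\mathcal{E}_A = o(\varepsilon^{2s})$ when $n > 4s$, and $\mathcal{E}_A = o(\varepsilon^{2s}|\log\varepsilon|)$ when $n = 4s$.

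Assembling all of this, for small $\varepsilon$,
$$Q_\lambda^A(W_\varepsilon) \le c_s\det A(0)^{s/n} K_s(n)^{-1} - c\,\lambda\,\varepsilon^{2s}\bigl(1 + o(1)\bigr)$$
(with an extra $|\log\varepsilon|$ factor when $n = 4s$), yielding the strict inequality required. The technical core of the work is thus the pointwise gradient estimate on $w_1$ and the three-regime integral analysis above; the remaining $L^2$, $L^{2n/(n-2s)}$ and cutoff-error expansions are standard Brezis--Nirenberg computations adapted to the extension framework.
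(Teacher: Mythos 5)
Your proposal follows essentially the same route as the paper: build a Caffarelli--Silvestre bubble concentrating at $x_0$, change coordinates to diagonalize $A(x_0)$ (the paper uses the spectral decomposition $A(0)=O^T\mathrm{diag}(a_1,\dots,a_n)O$; your $T$ with $T^TT=A(0)^{-1}$ is the same device), write $B(x)=B(0)+(B(x)-B(0))$, absorb the error with the $|x|^\sigma$ weight from \eqref{H1}, and compare the resulting Rayleigh quotient with the $\lambda$-mass term $\asymp\lambda\varepsilon^{2s}$ (or $\varepsilon^{2s}|\log\varepsilon|$ when $n=4s$). Your three-regime conclusion ($O(\varepsilon^{\min(\sigma,n-2s)})$ with a log at equality) and the final assembly also coincide with the paper's Lemma containing \eqref{phiwbound} and the chain \eqref{qlambdabd}.

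The one place where your proposal diverges from the paper --- and it is precisely the step the paper singles out as "the most delicate part" --- is the justification of the decay estimate. You assert the uniform pointwise bound $|\nabla w_1(\xi,\eta)|^2 \lesssim (|\xi|^2+\eta^2)^{-(n-2s+1)}$ at infinity and say you "would verify it directly from the Poisson-type representation," then integrate it out over rays $\eta=|\xi|t$. The paper does not establish, or use, such a uniform pointwise bound. Instead it proves the weighted integral bound \eqref{sigmabigx} directly, by splitting the Poisson-kernel representation of $\nabla_{x}w_1$ into $I_1,I_2,I_3$ according to whether $|\xi|<|x|/2$, $|\xi|>3|x|/2$, or $|\xi|\sim|x|$, and further handling $I_3$ by two different devices depending on whether $y\gtrless|x|$ (an integration-by-parts reduction to $J_1+J_2$ for $y<|x|$). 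Some of the component bounds \eqref{I1bound}--\eqref{I3boundybig} are actually weaker than what your single pointwise estimate would give for small $y$; the paper compensates by performing the $y$-integrals over the two regions $0<y<|x|$ and $y>|x|$ separately. So while your advertised gradient decay is plausibly correct for $|\xi|\gtrsim1$ (it is sharp for the explicit $s=1/2$ extension), it is not free, and verifying it rigorously from the Poisson kernel is in substance the same work the paper packs into its $I_1,I_2,I_3,J_1,J_2$ analysis. Your proposal thus correctly identifies the skeleton of the argument, but treats as a quoted fact the one lemma that constitutes the technical core of the interior existence proof.
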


We first derive some necessary estimate. For simplicity of notations, we
first assume $x_{0}=0$ and $A\left( 0\right) =I_{n}.$ Choose a smooth
nonincreasing cut-off function $\phi (t)\in C^{\infty }\left( \mathbb{R}%
_{+}\right) $ such that
\begin{equation*}
\phi (t)=1\;\text{ for }0\leq t\leq \frac{1}{2}\;\text{ and }\;\phi (t)=0\;%
\text{ if }t\geq 1.
\end{equation*}%
Let $r$ be small enough so that $B\left( 0,r\right) \subset \Omega .$ Define
$\phi _{r}\left( x,y\right) =\phi \left( \frac{r_{xy}}{r}\right) $ with $%
r_{xy}=\left( \left\vert x\right\vert ^{2}+y^{2}\right) ^{\frac{1}{2}}$. Let
$u_{\varepsilon }$ be given by \eqref{u tilde} and $w_{\varepsilon
}=E_{s}\left( u_{\varepsilon }\right) $. Then $w_{\varepsilon }\left(
x,y\right) =\varepsilon ^{n}w_{1}\left( \frac{x}{\varepsilon },\frac{y}{%
\varepsilon }\right) .$

\begin{lemma}
With the above notations, the family $\left\{ \phi_r w_{\varepsilon
}\right\}_{\varepsilon>0} $ and its trace on $\left\{ y=0\right\} ,$ namely $%
\left\{ \phi_r u_{\varepsilon }\right\}_{\varepsilon > 0} $ satisfy{%

\begin{eqnarray}
\left\Vert \phi_r w_{\varepsilon }\right\Vert _{H_{0,L}^{s,A}\left( \mathcal{%
C}_{\Omega }\right) }^{2} &\leq& c_{s}\int_{\mathbb{R}_{+}^{n+1}}y^{1-2s}\left\vert \nabla w_{\varepsilon
}(x,y)\right\vert ^{2}dxdy \label{phiwbound}\\
&&+\left\{
\begin{array}{ll}
O\left( \varepsilon ^{\sigma }\right) +O\left(
\varepsilon ^{n-2s}\right) , & \text{\ if }\sigma <n-2s \\
O\left( \varepsilon ^{\sigma }\ln \frac{1}{%
\varepsilon }\right) +O\left( \varepsilon ^{n-2s}\right) , & \text{\ if \ }%
\sigma =n-2s \\
O\left( \varepsilon ^{n-2s}\right) , & \text{\ if \ }%
\sigma >n-2s%
\end{array}%
\right. \nonumber
\end{eqnarray}%
}
\begin{equation}
\left\Vert \phi_r w_{\varepsilon }(x,0)\right\Vert _{L^{2}\left( \Omega
\right) }^{2}=\left\{
\begin{array}{ll}
C\varepsilon ^{2s}+O\left( \varepsilon ^{n-2s}\right) & \text{if }n>4s, \\
C\varepsilon ^{2s}\ln \frac{1}{\varepsilon }+O\left( \varepsilon ^{2s}\right)
& \text{if }n=4s.%
\end{array}%
\right.  \label{wepsilonl2}
\end{equation}
\end{lemma}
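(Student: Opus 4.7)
The plan is to plug $\phi_r w_\varepsilon$ into the weighted Dirichlet form, expand by the product rule, and compare term-by-term with the free-space integral $\int_{\mathbb{R}^{n+1}_+} y^{1-2s}|\nabla w_\varepsilon|^2\,dxdy$. Using the normalization $A(0) = I_n$ (so $B(0) = I_{n+1}$) and hypothesis (\ref{H1}), I would decompose $B(x) = I_{n+1} + E(x)$ on $\mathrm{supp}(\phi_r)$, where $E(x) = \mathrm{diag}(A(x) - I_n,\,0) \leq C_0|x|^\sigma I_{n+1}$ as bilinear forms, and expand $|\nabla(\phi_r w_\varepsilon)|^2 = \phi_r^2|\nabla w_\varepsilon|^2 + 2\phi_r w_\varepsilon \nabla w_\varepsilon \cdot \nabla\phi_r + w_\varepsilon^2|\nabla\phi_r|^2$. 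The unperturbed principal term then satisfies the clean bound $c_s\iint \phi_r^2\, y^{1-2s}|\nabla w_\varepsilon|^2 \leq c_s\int_{\mathbb{R}^{n+1}_+} y^{1-2s}|\nabla w_\varepsilon|^2$, which becomes the leading term in (\ref{phiwbound}).

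For the cut-off errors I would use the scaling $w_\varepsilon(x,y) = \varepsilon^{-(n-2s)/2} w_1(x/\varepsilon,\, y/\varepsilon)$ together with the standard Poisson-kernel decay $w_1(\xi,\eta) \lesssim (1+r_{\xi\eta})^{-(n-2s)}$ and $|\nabla w_1(\xi,\eta)| \lesssim (1+r_{\xi\eta})^{-(n-2s+1)}$. On the annulus $\{r/2 \leq r_{xy}\leq r\}$ where $\nabla\phi_r$ is supported, these give $w_\varepsilon\lesssim \varepsilon^{(n-2s)/2}r^{-(n-2s)}$ and $|\nabla w_\varepsilon|\lesssim \varepsilon^{(n-2s)/2}r^{-(n-2s+1)}$; integrating against $y^{1-2s}$ over a region of volume $O(r^{n+1})$ (with $\int_0^r y^{1-2s}\,dy\sim r^{2-2s}$) gives $O(\varepsilon^{n-2s})$ for all cross and cut-off terms, in both the unperturbed and the $|x|^\sigma$-weighted pieces.

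The delicate step is the $|x|^\sigma$-weighted principal term $\iint_{B_r\times(0,r)} y^{1-2s}|x|^\sigma|\nabla_x w_\varepsilon|^2\,dxdy$. After $x=\varepsilon\xi$, $y=\varepsilon\eta$ it becomes
\begin{equation*}
\varepsilon^\sigma \iint_{B_{r/\varepsilon}\times(0,r/\varepsilon)} \eta^{1-2s}|\xi|^\sigma|\nabla_\xi w_1(\xi,\eta)|^2\,d\xi d\eta.
\end{equation*}
I would split at $r_{\xi\eta}=1$: the core contributes a bounded constant, while on the exterior, in polar coordinates $\xi=\rho\sin\theta\,\omega$, $\eta=\rho\cos\theta$ with volume element $\rho^n\sin^{n-1}\theta\,d\rho d\theta d\omega$, the angular-integrated radial density is of order $\rho^{\sigma - n + 2s - 1}$. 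The three regimes fall out immediately: for $\sigma<n-2s$ the $\rho$-integral converges, giving $O(\varepsilon^\sigma)$; for $\sigma=n-2s$ it is logarithmic over $[1,r/\varepsilon]$, giving $O(\varepsilon^\sigma\ln(1/\varepsilon))$; for $\sigma>n-2s$ it grows like $(r/\varepsilon)^{\sigma-n+2s}$, which combines with the prefactor $\varepsilon^\sigma$ to give $O(\varepsilon^{n-2s})$ that is absorbed by the cut-off error. Summing all pieces yields (\ref{phiwbound}).

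For (\ref{wepsilonl2}) I split $\int_\Omega (\phi_r u_\varepsilon)^2\,dx = \int_{|x|\leq r/2} u_\varepsilon^2\,dx + O(\varepsilon^{n-2s})$, with the annular remainder handled via the pointwise bound $u_\varepsilon\lesssim \varepsilon^{(n-2s)/2}r^{-(n-2s)}$; in the main piece the substitution $\xi=x/\varepsilon$ gives $\varepsilon^{2s}\int_{B_{r/(2\varepsilon)}}(1+|\xi|^2)^{-(n-2s)}\,d\xi$, and the integrand is integrable on $\mathbb{R}^n$ iff $n>4s$, producing the main order $C\varepsilon^{2s}$; at the borderline $n=4s$ the integral over $B_R$ grows like $\ln R$, yielding $C\varepsilon^{2s}\ln(1/\varepsilon)$. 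The chief obstacle is the case analysis in the perturbation integral, since the three regimes of (\ref{phiwbound}) arise from the same weighted integral and must be distinguished by tracking the leading radial power; the sharp Poisson-kernel decay $|\nabla w_1|\lesssim r_{\xi\eta}^{-(n-2s+1)}$ also needs justification from the explicit representation of $w_1$ before the angular-radial bookkeeping can be carried out.
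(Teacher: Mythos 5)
Your high-level architecture is the same as the paper's: expand the weighted Dirichlet form with the product rule, use hypothesis \eqref{H1} to peel off an $|x|^\sigma$-weighted perturbation of the free-space integral, knock out cross and cutoff terms as $O(\varepsilon^{n-2s})$, then concentrate all effort on $\iint y^{1-2s}|x|^\sigma|\nabla_x w_\varepsilon|^2$. The genuine divergence is in how that last integral is estimated. You posit a single isotropic pointwise bound $|\nabla_x w_1(\xi,\eta)|\lesssim (1+r_{\xi\eta})^{-(n-2s+1)}$ and then do one polar-coordinate integration, reading off the radial density $\rho^{\sigma-n+2s-1}$ and the three regimes. The paper instead writes $\nabla_{x_i} w_1 = I_1+I_2+I_3$ (splitting the $\xi$-integral at $|x|/2$ and $3|x|/2$), bounds each piece differently in the two regimes $y\geq|x|$ and $y\leq|x|$ (with a further $J_1+J_2$ integration-by-parts split of $I_3$ when $y\leq|x|$), and integrates each separately against $y^{1-2s}|x|^\sigma$. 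The paper's individual bounds are weaker than your unified one -- e.g.\ its estimate \eqref{I3boundybig} for $I_3$ when $y\gg|x|$ is larger than your claimed $r^{-(n-2s+1)}$ by a factor $(y/|x|)^{(n-2s)/2}$ -- yet they still feed to the same radial density after integration. So your route buys cleaner bookkeeping, but shifts the real work into proving the pointwise bound, which you flag but do not carry out; it is true and can be obtained by writing $\nabla_x w_1 = P^s_y * \nabla u_1$ and exploiting that $P^s_y$ is an approximate identity at scale $y$, but that is a comparable amount of case analysis to the paper's $I_1,I_2,I_3$ estimates. A point of care worth spelling out in your writeup: your decay bound holds only for the tangential gradient $\nabla_x w_1$, not for $\partial_y w_1$, which blows up like $y^{2s-1}$ near $y=0$ (this is precisely $\partial^s_\nu$); this is harmless in the $|x|^\sigma$ perturbation term, since $B(x)-B(0)$ has vanishing $(n+1,n+1)$ entry, but it means the cross/cutoff-term estimates cannot literally be deduced from the claimed pointwise decay and should instead be cited from Lemma 3.8 of \cite{BCPS} as the paper does. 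The argument for \eqref{wepsilonl2} matches the paper's (which cites the same \cite{BCPS} lemma).
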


\begin{proof}
The equation $\left( \ref{wepsilonl2}\right) $ follows from Lemma 3.8 in
\cite{BCPS}. We only need to prove $\left( \ref{phiwbound}\right) .$ Since
\begin{eqnarray}
\left\Vert \phi _{r}w_{\varepsilon }\right\Vert _{H_{0,L}^{s,A}\left(
\mathcal{C}_{\Omega }\right) }^{2} &=&c_{s}\iint_{\mathcal{C}_{\Omega
}}y^{1-2s}\left[ \nabla \left( \phi _{r}w_{\varepsilon }\right) \right]
^{T}B\left( x\right) \nabla \left( \phi _{r}w_{\varepsilon }\right) dxdy
\notag \\
&=&c_{s}\iint_{\mathcal{C}_{\Omega }}y^{1-2s}\phi _{r}^{2}\left( \nabla
w_{\varepsilon }\right) ^{T}B\left( x\right) \nabla w_{\varepsilon }dxdy
\notag \\
&&+2c_{s}\iint_{\mathcal{C}_{\Omega }}y^{1-2s}\phi _{r}w_{\varepsilon
}\left( \nabla \phi _{r}\right) ^{T}B\left( x\right) \nabla w_{\varepsilon
}dxdy  \notag \\
&&+c_{s}\iint_{\mathcal{C}_{\Omega }}y^{1-2s}w_{\varepsilon }^{2}\left(
\nabla \phi _{r}\right) ^{T}B\left( x\right) \nabla \phi dxdy  \notag \\
&\leq &c_{s}\iint_{\mathcal{C}_{\Omega }}y^{1-2s}\phi _{r}^{2}\left( \nabla
w_{\varepsilon }\right) ^{T}B\left( 0\right) \nabla w_{\varepsilon }dxdy
\notag \\
&&+c_{s}\iint_{\mathcal{C}_{\Omega }}y^{1-2s}\phi _{r}^{2}\left\vert \nabla
w_{\varepsilon }\right\vert ^{2}\left\vert x\right\vert ^{\sigma }dxdy
\notag \\
&&+Cc_{s}\iint_{\mathcal{C}_{\Omega }}y^{1-2s}\left\vert \phi
_{r}\right\vert \left\vert \nabla \phi _{r}\right\vert \left\vert
w_{\varepsilon }\right\vert \left\vert \nabla w_{\varepsilon }\right\vert
dxdy  \notag \\
&&+Cc_{s}\iint_{\mathcal{C}_{\Omega }}y^{1-2s}\left\vert \nabla \phi
_{r}\right\vert ^{2}\left\vert w_{\varepsilon }\right\vert ^{2}dxdy  \notag
\\
&=&I+II+III+IV.  \label{xbound}
\end{eqnarray}%
The third and fourth term in $\left( \ref{xbound}\right) $ can be estimated
in the same way as in proof of Lemma 3.8 in \cite{BCPS} and we have
\begin{equation*}
III+IV\leq O\left( \varepsilon ^{n-2s}\right) .
\end{equation*}%
The first term in $\left( \ref{xbound}\right) $ can be estimated as%
\begin{equation}
c_{s}\iint_{\mathcal{C}_{\Omega }}y^{1-2s}\phi _{r}^{2}\left( \nabla
w_{\varepsilon }\right) ^{T}B\left( 0\right) \nabla w_{\varepsilon }dxdy\leq
c_{s}\int_{\mathbb{R}_{+}^{n+1}}y^{1-2s}\left\vert \nabla w_{\varepsilon
}(x,y)\right\vert ^{2}dxdy.  \label{scaledharmonic}
\end{equation}%
For the rest of the proof, we estimate the second term in $\left( \ref%
{xbound}\right) $. We shall show
\begin{equation}
\iint_{\mathcal{C}_{\Omega }}y^{1-2s}\phi _{r}^{2}\left\vert \nabla
w_{\varepsilon }\right\vert ^{2}\left\vert x\right\vert ^{\sigma }dxdy\leq
\left\{
\begin{array}{ll}
O\left( \varepsilon ^{\sigma }\right)  & \text{if }\sigma <n-2s, \\
O\left( \varepsilon ^{\sigma }\ln \frac{1}{\varepsilon }\right)  & \text{if }%
\sigma =n-2s, \\
O\left( \varepsilon ^{n-2s}\right)  & \text{if }\sigma >n-2s.%
\end{array}%
\right.   \label{sigmabound}
\end{equation}%
To prove $\left( \ref{sigmabound}\right) $, we write
\begin{eqnarray}
&&\iint_{\mathcal{C}_{\Omega }}y^{1-2s}\phi _{r}^{2}\left\vert \nabla
w_{\varepsilon }\right\vert ^{2}\left\vert x\right\vert ^{\sigma }dxdy
\notag \\
&\leq &\varepsilon ^{2s-n-2}\iint_{\left\{ r_{xy}\leq r\right\}
}y^{1-2s}\left\vert \nabla w_{1}\left( \frac{x}{\varepsilon },\frac{y}{%
\varepsilon }\right) \right\vert ^{2}\left\vert x\right\vert ^{\sigma }dxdy
\notag \\
&=&\varepsilon ^{\sigma }\iint_{\left\{ r_{xy}\leq \frac{r}{\varepsilon }%
\right\} }y^{1-2s}\left\vert x\right\vert ^{\sigma }\left\vert \nabla
w_{1}\left( x,y\right) \right\vert ^{2}dxdy  \notag \\
&\leq &C\varepsilon ^{\sigma }\iint_{\left\{ r_{xy}\leq \frac{r}{\varepsilon
}\right\} \cap \{|x|<1\}}y^{1-2s}\left\vert \nabla w_{1}\left( {x},y\right)
\right\vert ^{2}dxdy  \notag \\
&&+C\varepsilon ^{\sigma }\iint_{\left\{ r_{xy}\leq \frac{r}{\varepsilon }%
\right\} \cap \{|x|\geq 1\}}y^{1-2s}\left\vert x\right\vert ^{\sigma
}\left\vert \nabla w_{1}\left( x,y\right) \right\vert ^{2}d{x}dy.
\label{sigmasplit}
\end{eqnarray}%
We shall prove
\begin{eqnarray}
&&\iint_{\left\{ \left\vert x\right\vert \geq 1\right\} \cap \left\{
r_{xy}\leq \frac{r}{\varepsilon }\right\} }y^{1-2s}\left\vert x\right\vert
^{\sigma }\left\vert \nabla w_{1}\left( x,y\right) \right\vert ^{2}dxdy
\notag \\
&\leq &\left\{
\begin{array}{ll}
C & \text{if }\sigma <n-2s, \\
C\ln \frac{1}{\varepsilon } & \text{if }\sigma =n-2s, \\
C\varepsilon ^{n-2s-\sigma } & \text{if }\sigma >n-2s.%
\end{array}%
\right.   \label{sigmabigx}
\end{eqnarray}%
Then $\left( \ref{sigmabound}\right) $ follows directly from $\left( \ref%
{sigmasplit}\right) $ and $\left( \ref{sigmabigx}\right) $. We estimate $%
\nabla _{x}w_{1}\left( x,y\right) $ using its explicit representation
formula. Since
\begin{eqnarray*}
\nabla _{x_{i}}w_{1}\left( x,y\right)  &=&\int_{\mathbb{R}^{n}}\frac{%
y^{2s}\left( x_{i}-\xi _{i}\right) }{\left( y^{2}+\left\vert x-\xi
\right\vert ^{2}\right) ^{\frac{n+2s}{2}+1}\left( 1+\left\vert \xi
\right\vert ^{2}\right) ^{\frac{n-2s}{2}}}d\xi  \\
&=&\int_{\left\vert \xi \right\vert <\frac{\left\vert x\right\vert }{2}}%
\frac{y^{2s}\left( x_{i}-\xi _{i}\right) }{\left( y^{2}+\left\vert x-\xi
\right\vert ^{2}\right) ^{\frac{n+2s}{2}+1}\left( 1+\left\vert \xi
\right\vert ^{2}\right) ^{\frac{n-2s}{2}}}d\xi  \\
&&+\int_{\left\vert \xi \right\vert >\frac{3\left\vert x\right\vert }{2}}%
\frac{y^{2s}\left( x_{i}-\xi _{i}\right) }{\left( y^{2}+\left\vert x-\xi
\right\vert ^{2}\right) ^{\frac{n+2s}{2}+1}\left( 1+\left\vert \xi
\right\vert ^{2}\right) ^{\frac{n-2s}{2}}}d\xi  \\
&&+\int_{\frac{\left\vert x\right\vert }{2}<\left\vert \xi \right\vert <%
\frac{3\left\vert x\right\vert }{2}}\frac{y^{2s}\left( x_{i}-\xi _{i}\right)
}{\left( y^{2}+\left\vert x-\xi \right\vert ^{2}\right) ^{\frac{n+2s}{2}%
+1}\left( 1+\left\vert \xi \right\vert ^{2}\right) ^{\frac{n-2s}{2}}}d\xi  \\
&=&I_{1}+I_{2}+I_{3}
\end{eqnarray*}%
If $\left\vert \xi \right\vert <\frac{\left\vert x\right\vert }{2}$ or $%
\left\vert \xi \right\vert $ $>\frac{3\left\vert x\right\vert }{2},$ we have
$\left\vert x-\xi \right\vert >\frac{\left\vert x\right\vert }{2}$ and $%
\left\vert x-\xi \right\vert >\min \left( 1,\frac{1}{3}\right) \left\vert
\xi \right\vert .$ We can bound $I_{1}$ and $I_{2}$ as follows.
\begin{eqnarray}
\left\vert I_{1}\right\vert  &=&\left\vert \int_{\left\vert \xi \right\vert <%
\frac{\left\vert x\right\vert }{2}}\frac{y^{2s}\left( x_{i}-\xi _{i}\right)
}{\left( y^{2}+\left\vert x-\xi \right\vert ^{2}\right) ^{\frac{n+2s}{2}%
+1}\left( 1+\left\vert \xi \right\vert ^{2}\right) ^{\frac{n-2s}{2}}}d\xi
\right\vert   \notag \\
&\leq &C\frac{y^{-1+2s}}{\left( y^{2}+\left\vert x\right\vert ^{2}\right) ^{%
\frac{n+2s}{2}}}\int_{\left\vert \xi \right\vert <\frac{\left\vert
x\right\vert }{2}}\frac{1}{\left( 1+\left\vert \xi \right\vert ^{2}\right) ^{%
\frac{n-2s}{2}}}d\xi   \notag \\
&\leq &C\frac{y^{-1+2s}}{\left( y^{2}+\left\vert x\right\vert ^{2}\right) ^{%
\frac{n+2s}{2}}}\left\vert x\right\vert ^{2s},  \label{I1bound}
\end{eqnarray}%
\begin{eqnarray}
\left\vert I_{2}\right\vert  &=&\left\vert \int_{\left\vert \xi \right\vert >%
\frac{3\left\vert x\right\vert }{2}}\frac{y^{2s}\left( x_{i}-\xi _{i}\right)
}{\left( y^{2}+\left\vert x-\xi \right\vert ^{2}\right) ^{\frac{n+2s}{2}%
+1}\left( 1+\left\vert \xi \right\vert ^{2}\right) ^{\frac{n-2s}{2}}}d\xi
\right\vert   \notag \\
&\leq &C\frac{y^{-1+2s}}{\left( y^{2}+\left\vert x\right\vert ^{2}\right) ^{%
\frac{n-\delta }{2}}}\int_{\left\vert \xi \right\vert >\frac{3\left\vert
x\right\vert }{2}}\frac{1}{\left( 1+\left\vert \xi \right\vert ^{2}\right) ^{%
\frac{n-2s}{2}}\left\vert \xi \right\vert ^{2s+\delta }}d\xi   \notag \\
&\leq &C\frac{y^{-1+2s}}{\left( y^{2}+\left\vert x\right\vert ^{2}\right) ^{%
\frac{n-\delta }{2}}}\left\vert x\right\vert ^{-\delta }  \label{I2bound}
\end{eqnarray}%
Lastly, we bound $I_{3}$ in two different cases. If $y\geq \left\vert
x\right\vert ,$ we bound $I_{3}$ by
\begin{eqnarray}
\left\vert I_{3}\right\vert  &=&\left\vert \int_{\frac{\left\vert
x\right\vert }{2}<\left\vert \xi \right\vert <\frac{3\left\vert x\right\vert
}{2}}\frac{y^{2s}\left( x_{i}-\xi _{i}\right) }{\left( y^{2}+\left\vert
x-\xi \right\vert ^{2}\right) ^{\frac{n+2s}{2}+1}\left( 1+\left\vert \xi
\right\vert ^{2}\right) ^{\frac{n-2s}{2}}}d\xi \right\vert   \notag \\
&\leq &\frac{Cy^{2s-\frac{n+2s}{2}-1}}{\left( 1+\left\vert x\right\vert
^{2}\right) ^{\frac{n-2s}{2}}}\int_{\frac{\left\vert x\right\vert }{2}%
<\left\vert \xi \right\vert <\frac{3\left\vert x\right\vert }{2}}\left\vert
x-\xi \right\vert ^{-\frac{n+2s}{2}}d\xi   \notag \\
&\leq &\frac{Cy^{-\frac{n-2s}{2}-1}}{\left( 1+\left\vert x\right\vert
^{2}\right) ^{\frac{N-2s}{2}}}\int_{\left\vert \xi -x\right\vert <\frac{%
5\left\vert x\right\vert }{2}}\left\vert x-\xi \right\vert ^{-\frac{n+2s}{2}%
}d\xi   \notag \\
&\leq &C\frac{y^{-\frac{n-2s}{2}-1}}{\left( 1+\left\vert x\right\vert
^{2}\right) ^{\frac{n-2s}{2}}}\left\vert x\right\vert ^{\frac{n-2s}{2}}
\notag \\
&\leq &Cy^{-\frac{n-2s}{2}-1}\left\vert x\right\vert ^{-\frac{n-2s}{2}},
\label{I3boundybig}
\end{eqnarray}%
if $y\leq \left\vert x\right\vert ,$ we write
\begin{eqnarray*}
I_{3} &=&\int_{\frac{\left\vert x\right\vert }{2}<\left\vert \xi \right\vert
<\frac{3\left\vert x\right\vert }{2}}D_{x_{i}}P_{y}^{s}\left( x-\xi \right)
f\left( \xi \right) d\xi  \\
&=&-\int_{\frac{\left\vert x\right\vert }{2}<\left\vert \xi \right\vert <%
\frac{3\left\vert x\right\vert }{2}}D_{\xi _{i}}P_{y}^{s}\left( x-\xi
\right) f\left( \xi \right) d\xi  \\
&=&\int_{\frac{\left\vert x\right\vert }{2}<\left\vert \xi \right\vert <%
\frac{3\left\vert x\right\vert }{2}}P_{y}^{s}\left( x-\xi \right) D_{\xi
_{i}}f\left( \xi \right) d\xi  \\
&&-\int_{\left\{ \left\vert \xi \right\vert =\frac{\left\vert x\right\vert }{%
2}\right\} \cup \left\{ \left\vert \xi \right\vert =\frac{3\left\vert
x\right\vert }{2}\right\} }P_{y}^{s}\left( x-\xi \right) f\left( \xi \right)
dS_{\xi } \\
&=&J_{1}+J_{2}
\end{eqnarray*}%
\begin{eqnarray}
\left\vert J_{1}\right\vert  &=&\left\vert \int_{\frac{\left\vert
x\right\vert }{2}<\left\vert \xi \right\vert <\frac{3\left\vert x\right\vert
}{2}}P_{y}^{s}\left( x-\xi \right) D_{\xi _{i}}f\left( \xi \right) d\xi
\right\vert   \notag \\
&\leq &\frac{C}{\left( 1+\left\vert x\right\vert ^{2}\right) ^{\frac{n-2s+1}{%
2}}}\int_{\frac{\left\vert x\right\vert }{2}<\left\vert \xi \right\vert <%
\frac{3\left\vert x\right\vert }{2}}P_{y}^{s}\left( x-\xi \right) d\xi
\notag \\
&\leq &\frac{C}{\left( 1+\left\vert x\right\vert ^{2}\right) ^{\frac{n-2s+1}{%
2}}}  \label{I3boundinterior}
\end{eqnarray}%
When $\left\vert \xi \right\vert =\frac{\left\vert x\right\vert }{2}$ or $%
\frac{3\left\vert x\right\vert }{2},$%
\begin{equation}
P_{y}^{s}\left( x-\xi \right) f\left( \xi \right) \leq C\frac{y^{2s}}{\left(
y^{2}+\left\vert x\right\vert ^{2}\right) ^{\frac{n+2s}{2}}}\frac{1}{\left(
1+\left\vert x\right\vert ^{2}\right) ^{\frac{n-2s}{2}}}  \label{I3boundry}
\end{equation}%
When $\sigma <n-2s,$we have the following bound from $\left( \ref{I1bound}%
\right) $
\begin{eqnarray}
&&\int_{\left\vert x\right\vert \geq 1}\int_{\left\vert x\right\vert
}^{\infty }y^{1-2s}\left\vert x\right\vert ^{\sigma }I_{1}^{2}dxdy  \notag \\
&\leq &C\int_{\left\vert x\right\vert \geq 1}\int_{\left\vert x\right\vert
}^{\infty }y^{1-2s}\left\vert x\right\vert ^{\sigma }\frac{y^{2\left(
2s-1\right) }}{\left( y^{2}+\left\vert x\right\vert ^{2}\right) ^{n+2s}}%
\left\vert x\right\vert ^{4s}dydx  \notag \\
&\leq &C\int_{\left\vert x\right\vert \geq 1}\left\vert x\right\vert
^{\sigma +4s}\int_{\left\vert x\right\vert }^{\infty }y^{1-2s}\frac{%
y^{2\left( 2s-1\right) }}{\left( y\left\vert x\right\vert \right) ^{n+2s}}%
dydx  \notag \\
&\leq &C\int_{\left\vert x\right\vert \geq 1}\int_{\left\vert x\right\vert
}^{\infty }\left\vert x\right\vert ^{\sigma -n+2s}y^{-n-1}dydx  \notag \\
&\leq &C\int_{\left\vert x\right\vert \geq 1}\left\vert x\right\vert
^{\sigma -2n+2s}dx  \notag \\
&=&C\int_{1}^{\infty }r^{\sigma -n+2s-1}dr<\infty ,
\label{I1squareboundbigy}
\end{eqnarray}%
and
\begin{eqnarray}
&&\int_{\left\vert x\right\vert \geq 1}\int_{0}^{\left\vert x\right\vert
}y^{1-2s}\left\vert x\right\vert ^{\sigma }I_{1}^{2}dxdy  \notag \\
&\leq &C\int_{\left\vert x\right\vert \geq 1}\left\vert x\right\vert
^{4s+\sigma }\int_{0}^{\left\vert x\right\vert }y^{1-2s}\frac{y^{2\left(
2s-1\right) }}{\left( y^{2}+\left\vert x\right\vert ^{2}\right) ^{n+2s}}dydx
\notag \\
&\leq &C\int_{\left\vert x\right\vert \geq 1}\left\vert x\right\vert
^{\sigma +4s-2n-4s}\int_{0}^{\left\vert x\right\vert }y^{2s-1}dydx  \notag \\
&=&C\int_{1}^{\infty }r^{\sigma -n+2s-1}dr<\infty .
\label{I1sqareboundsmally}
\end{eqnarray}%
Similarly, $\left( \ref{I2bound}\right) $ implies
\begin{eqnarray}
&&\int_{\left\vert x\right\vert \geq 1}\int_{0}^{\infty }y^{1-2s}\left\vert
x\right\vert ^{\sigma }I_{2}^{2}dxdy  \notag \\
&\leq &C\int_{\left\vert x\right\vert \geq 1}\int_{0}^{\infty
}y^{1-2s}\left\vert x\right\vert ^{\sigma }\frac{y^{2\left( 2s-1\right) }}{%
\left( y^{2}+\left\vert x\right\vert ^{2}\right) ^{n-\delta }}\left\vert
x\right\vert ^{-2\delta }dydx  \notag \\
&=&C\int_{\left\vert x\right\vert \geq 1}\left\vert x\right\vert ^{\sigma
-2\delta }\int_{0}^{\infty }\frac{y^{2s-1}}{\left( y^{2}+\left\vert
x\right\vert ^{2}\right) ^{n-\delta }}dydx  \notag \\
&\leq &C\int_{\left\vert x\right\vert \geq 1}\left\vert x\right\vert
^{\sigma -2\delta -2n+2\delta +2s-1+1}\int_{0}^{\infty }\frac{u^{2s-1}}{%
\left( 1+u^{2}\right) ^{n-\delta }}dudx  \notag \\
&\leq &C\int_{1}^{\infty }r^{\sigma -n+2s-1}dr<\infty .
\label{I2squarebound}
\end{eqnarray}%
Lastly, it follows from $\left( \ref{I3boundybig}\right) $ that
\begin{eqnarray}
&&\int_{\left\vert x\right\vert \geq 1}\int_{\left\vert x\right\vert
}^{\infty }y^{1-2s}\left\vert x\right\vert ^{\sigma }I_{3}^{2}dxdy  \notag \\
&\leq &C\int_{\left\vert x\right\vert \geq 1}\left\vert x\right\vert
^{\sigma -\left( n-2s\right) }\int_{\left\vert x\right\vert }^{\infty
}y^{1-2s-\left( n-2s\right) -2}dydx  \notag \\
&\leq &C\int_{\left\vert x\right\vert \geq 1}\left\vert x\right\vert
^{\sigma -\left( 2n-2s\right) }dx  \notag \\
&=&C\int_{1}^{\infty }r^{\sigma -n+2s-1}dr<\infty .
\label{I3squareboundbigy}
\end{eqnarray}%
If $\sigma <n-2s,$ $\left( \ref{I3boundinterior}\right) $ and $\left( \ref%
{I3boundry}\right) $ imply
\begin{eqnarray}
&&\int_{\left\vert x\right\vert \geq 1}\int_{0}^{\left\vert x\right\vert
}y^{1-2s}\left\vert x\right\vert ^{\sigma }J_{1}^{2}dxdy  \notag \\
&\leq &C\int_{\left\vert x\right\vert \geq 1}\frac{\left\vert x\right\vert
^{\sigma }}{\left( 1+\left\vert x\right\vert ^{2}\right) ^{n-2s+1}}%
\int_{0}^{\left\vert x\right\vert }y^{1-2s}dydx  \notag \\
&\leq &C\int_{\left\vert x\right\vert \geq 1}\left\vert x\right\vert
^{\sigma -2\left( n-2s+1\right) }\left\vert x\right\vert ^{2-2s}dx  \notag \\
&=&C\int_{1}^{\infty }r^{\sigma -n+2s-1}dr<\infty ,
\label{I3squareboundsmally1}
\end{eqnarray}%
and
\begin{eqnarray}
&&\int_{\left\vert x\right\vert \geq 1}\int_{0}^{\left\vert x\right\vert
}y^{1-2s}\left\vert x\right\vert ^{\sigma }J_{2}^{2}dxdy  \notag \\
&\leq &C\int_{\left\vert x\right\vert \geq 1}\left\vert x\right\vert
^{\sigma }\int_{0}^{\left\vert x\right\vert }y^{1-2s}\frac{y^{4s}\left\vert
x\right\vert ^{2\left( n-1\right) }}{\left( y^{2}+\left\vert x\right\vert
^{2}\right) ^{n+2s}\left( 1+\left\vert x\right\vert ^{2}\right) ^{n-2s}}dydx
\notag \\
&\leq &C\int_{\left\vert x\right\vert \geq 1}\left\vert x\right\vert
^{\sigma +2\left( n-1\right) -4n}\int_{0}^{\left\vert x\right\vert
}y^{1+2s}dydx  \notag \\
&=&C\int_{1}^{\infty }r^{\sigma -n+2s-1}dr<\infty .
\label{I3squareboundsmally2}
\end{eqnarray}%
It then follows from $\left( \ref{I1squareboundbigy}\right) -\left( \ref%
{I3squareboundsmally2}\right) $ that
\begin{eqnarray*}
&&\int_{\left\vert x\right\vert \geq 1}\int_{0}^{\infty }y^{1-2s}\left\vert
x\right\vert ^{\sigma }\left\vert \nabla _{x}w_{1}\left( x,y\right)
\right\vert ^{2}dydx \\
&=&\int_{\left\vert x\right\vert \geq 1}\int_{0}^{\infty }y^{1-2s}\left\vert
x\right\vert ^{\sigma }\left( I_{1}+I_{2}+I_{3}\right) ^{2}dydx \\
&\leq &C\int_{\left\vert x\right\vert \geq 1}\int_{0}^{\infty
}y^{1-2s}\left\vert x\right\vert ^{\sigma }\left(
I_{1}^{2}+I_{2}^{2}+I_{3}^{2}\right) dydx<\infty .
\end{eqnarray*}%
For $\sigma =n-2s,$ the same integrals in $\left( \ref{I1squareboundbigy}%
\right) -\left( \ref{I3squareboundsmally2}\right) $ derive
\begin{equation*}
\iint_{\{\left\vert x\right\vert \geq 1\}\cap \left\{ {r_{xy}}\leq \frac{r}{%
\varepsilon }\right\} }y^{1-2s}\left\vert x\right\vert ^{\sigma }\left\vert
\nabla w_{1}\left( x,y\right) \right\vert ^{2}dydx\leq C\ln \frac{1}{%
\varepsilon }.
\end{equation*}%
For $\sigma >n-2s,$ integrals in $\left( \ref{I1squareboundbigy}\right)
-\left( \ref{I3squareboundsmally2}\right) $ yield
\begin{equation*}
\iint_{\{\left\vert x\right\vert \geq 1\}\cap \left\{ {r_{xy}}\leq \frac{r}{%
\varepsilon }\right\} }y^{1-2s}\left\vert x\right\vert ^{\sigma }\left\vert
\nabla w_{1}\left( x,y\right) \right\vert ^{2}dydx\leq C\left( \frac{1}{%
\varepsilon }\right) ^{\sigma -n+2s}=C\varepsilon ^{n-2s-\sigma }.
\end{equation*}
\end{proof}

For general $A\left( 0\right) $ case, we consider the following coordinate
transformation. Let $\left\{ a_{i}\right\} _{i=1}^{n}$ be eigenvalues of $%
A\left( x_{0}\right) $ and $O$ be the orthogonal matrix such that
\begin{equation*}
A\left(0\right) =O^{T} \mbox{diag }\left( a_{1},\ldots ,a_{n}\right) O.
\end{equation*}
Define the mapping $\Phi :\Omega \rightarrow $ $\widetilde{\Omega }=\Phi
\left( \Omega \right) $ by $\tilde{x}=\Phi \left( x\right) $ such that $%
\tilde{x}_{i}=a_{i}^{-\frac{1}{2}}\left( Ox\right) _{i}$. Let $\widetilde{u}%
_{\varepsilon }\left( \tilde{x}\right) $=$\frac{\varepsilon ^{\frac{n-2s}{2}}%
}{\left( \widetilde{\left\vert x\right\vert }^{2}+\varepsilon ^{2}\right) ^{%
\frac{n-2s}{2}}}$ and $\widetilde{w_{\varepsilon }}\left( \widetilde{x}%
,y\right) =E_{s}\left( \widetilde{u_{\varepsilon }}\right) $. Then we have
\begin{equation*}
\widetilde{w_{\varepsilon }}\left( \tilde{x},y\right) =\varepsilon ^{\frac{%
2s-n}{2}}\widetilde{w_{1}}\left( \frac{\tilde{x}}{\varepsilon },\frac{y}{%
\varepsilon }\right) .
\end{equation*}%
For $x=\Phi ^{-1}\left( \tilde{x}\right) $, we define $u_{\varepsilon
}\left( x\right) =\widetilde{u_{\varepsilon }}\left( \tilde{x}\right) $ and $%
w_{\varepsilon }\left( x,y\right) =\widetilde{w_{\varepsilon }}\left( \tilde{%
x},y\right) $.

To construct test functions, denote $B^{n+1}(x,r)$ the ball in ${\mathbb{R}}%
^{n+1}$ centered at $(x,0)$ with the radius $r$. We fix $r>0$ small enough
such that $\overline{B^{n+1}(\tilde{x}_{0},r)}\subset \overline{\mathcal{C}}%
_{\widetilde{\Omega }}$.We define the cutoff function $\phi _{r}(x,y)\in
C_{0}^{\infty }(\Phi ^{-1}(B_{+}^{n+1}(\tilde{x}_{0},r)))$ by $\phi
_{r}(x,y)=\phi _{0}(r_{xy}/r)$ with $r_{xy}^{2}=|\Phi (x)|^{2}+y^{2}$, and
let
\begin{equation}
V_{\varepsilon} (x,y)=\phi _{r}(x,y)w_{\varepsilon }(x,y).
\label{test function}
\end{equation}%
Under these notations, we have%
\begin{eqnarray*}
&&c_{s}\iint_{\mathcal{C}_{\Omega }}y^{1-2s}\phi _{r}^{2}\left( \nabla
w_{\varepsilon }\right) ^{T}B\left( 0\right) \nabla w_{\varepsilon }dxdy \\
&=&c_{s}\iint_{\mathcal{C}_{\Omega }}y^{1-2s}\phi _{r}^{2}\left(
\sum_{i=1}^{n}a_{i}\left( D_{x_{i}}w_{\varepsilon }\right) ^{2}+\left(
D_{y}w_{\varepsilon }\right) ^{2}\right) dxdy \\
&=&\det \left( A\left( 0\right) \right) ^{\frac{1}{2}}c_{s}\int_{\mathbb{R}%
_{+}^{n+1}}y^{1-2s}\left\vert \nabla _{\left( \tilde{x},y\right) }\widetilde{%
w_{\varepsilon }}(\tilde{x},y)\right\vert ^{2}d\widetilde{x}dy,
\end{eqnarray*}%
\begin{equation*}
\left\Vert u_{\varepsilon }\right\Vert _{L^{2}\left( \Omega \right)
}^{2}=\det \left( A\left( 0\right) \right) \left\Vert \widetilde{%
u_{\varepsilon }}\right\Vert _{L^{2}\left( \widetilde{\Omega }\right) }^{2},
\end{equation*}%
and
\begin{equation*}
\left\Vert u_{\varepsilon }\right\Vert _{L^{r}\left( \Omega \right)
}^{r}=\det \left( A\left( 0\right) \right) ^{\frac{r}{2}}\left\Vert
\widetilde{u_{\varepsilon }}\right\Vert _{L^{r}\left( \widetilde{\Omega }%
\right) }^{r}.
\end{equation*}%
We then have the following bounds on $V_{\varepsilon .}$

\begin{lemma}
\label{Z general estimate} With the above notations, for small $\varepsilon
>0$, the family of functions $\left\{ V_{\varepsilon }\right\} _{\varepsilon
>0}$ and its trace on $\left\{ y=0\right\} ,$ namely $\left\{ V_{\varepsilon
}(x,0)\right\} _{\varepsilon >0}$ satisfy{
\begin{eqnarray}
\left\Vert V_{\varepsilon }\right\Vert _{H_{0,L}^{s,A}\left( \mathcal{C}%
_{\Omega }\right) }^{2} &\leq& c_{s}\int_{\mathbb{R}_{+}^{n+1}}y^{1-2s}\left\vert \nabla \tilde{w_{\varepsilon
}}(x,y)\right\vert ^{2}d\tilde{x}dy\\  \label{phiw}
&&+ \left\{
\begin{array}{ll}
O\left( \varepsilon ^{\sigma }\right)
+O\left( \varepsilon ^{n-2s}\right) , & \text{if }\sigma <n-2s \\
O\left( \varepsilon ^{\sigma }\ln \frac{1}{%
\varepsilon }\right) +O\left( \varepsilon ^{n-2s}\right) , & \text{if }%
\sigma =n-2s \\
O\left( \varepsilon ^{n-2s}\right) , &
\text{if }\sigma >n-2s,%
\end{array}%
\right. \nonumber
\end{eqnarray}%
}
\begin{equation}
\left\Vert V_{\varepsilon }(x,0)\right\Vert _{L^{2}\left( \Omega \right)
}^{2}=\left\{
\begin{array}{ll}
C\varepsilon ^{2s}+O\left( \varepsilon ^{n-2s}\right) & \text{if }n>4s, \\
C\varepsilon ^{2s}\ln \frac{1}{\varepsilon }+O\left( \varepsilon ^{2s}\right)
& \text{if }n=4s,%
\end{array}%
\right. .  \label{uepsilonl2}
\end{equation}
\end{lemma}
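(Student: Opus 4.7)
The plan is to reduce to the identity-matrix case already treated in the previous lemma by means of the linear change of variables $\tilde x = \Phi(x)$ (which diagonalizes $A(0)$) introduced in the paragraphs preceding the statement. Since $B(x)$ agrees with $B(0)$ up to an error controlled by $|x|^\sigma$ in view of (\ref{H1}), the whole computation should split cleanly into a ``frozen-coefficient'' piece that produces the desired leading term after the change of variables, and a perturbation piece to which the estimate (\ref{sigmabound}) from the preceding lemma can be applied verbatim in the $\tilde x$ coordinates.

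Concretely, I would first expand $\|V_\varepsilon\|_{H_{0,L}^{s,A}}^2$ using the product rule exactly as in (\ref{xbound}): a main term $c_s\iint y^{1-2s}\phi_r^2(\nabla w_\varepsilon)^T B(x)\nabla w_\varepsilon\,dxdy$, a cross term, and a gradient-of-cutoff term. The pointwise bound $(\nabla w_\varepsilon)^T B(x)\nabla w_\varepsilon \le (\nabla w_\varepsilon)^T B(0)\nabla w_\varepsilon + C|x|^\sigma |\nabla_x w_\varepsilon|^2$ coming from (\ref{H1}) splits the main term into two pieces. For the $B(0)$ piece, I would drop $\phi_r^2\le 1$, enlarge the integration domain to $\mathbb{R}^{n+1}_+$, and invoke the change-of-variables identity displayed just before the lemma to convert it into the target expression $c_s\int_{\mathbb{R}^{n+1}_+} y^{1-2s}|\nabla \tilde w_\varepsilon|^2 d\tilde x\, dy$ (absorbing any jacobian factors, such as $\det(A(0))^{1/2}$, that are finite constants independent of $\varepsilon$). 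For the $|x|^\sigma |\nabla_x w_\varepsilon|^2$ piece I would change variables to $\tilde x$; since $\Phi$ is a bounded linear isomorphism, $|x|$ and $|\tilde x|$ are comparable, so up to harmless constants this integral is of exactly the form treated in (\ref{sigmabound}), giving the error $O(\varepsilon^\sigma)$, $O(\varepsilon^\sigma\ln \frac{1}{\varepsilon})$, or $O(\varepsilon^{n-2s})$ according to the position of $\sigma$ relative to $n-2s$. The cross and cutoff-gradient terms are supported on $\{r/2\le r_{xy}\le r\}$, away from the concentration point, where $B(x)$ and $\nabla\phi_r$ are simply bounded, so the $O(\varepsilon^{n-2s})$ bound from the preceding lemma transfers without change. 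For the trace inequality (\ref{uepsilonl2}), a direct application of the same change of variables reduces $\|V_\varepsilon(\cdot,0)\|_{L^2(\Omega)}^2$ to $\det(A(0))^{1/2}\|\tilde\phi_r\tilde u_\varepsilon\|_{L^2(\widetilde\Omega)}^2$, which is handled by (\ref{wepsilonl2}).

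The only genuinely delicate point I anticipate is the perturbation piece: one must check that after the change of variables the weight $|x|^\sigma$, which becomes $|\Phi^{-1}(\tilde x)|^\sigma$, remains bounded above by a multiple of $|\tilde x|^\sigma$ on the whole support of the transformed cutoff, uniformly in $\varepsilon$, so that the dichotomy $\sigma \lessgtr n-2s$ in (\ref{sigmabound}) is genuinely preserved. All other steps are essentially mechanical transcriptions of the computation already carried out in the case $A(0)=I_n$.
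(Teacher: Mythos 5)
Your proposal is correct and is essentially the argument the paper intends (the paper states the lemma without a separate proof, leaving this reduction implicit): expand as in (\ref{xbound}), split the main term via (\ref{H1}), and pass through the linear change of variables $\tilde x=\Phi(x)$ to reduce everything to the $A(0)=I_n$ computation of the preceding lemma, with the cross/cutoff terms bounded away from the concentration point and the trace estimate transferring directly. Your ``delicate point'' resolves itself immediately since $\Phi$ is a fixed linear isomorphism, so $|x|\asymp|\tilde x|$ globally; the one thing to flag is that the Jacobian factor $\det(A(0))^{1/2}$ should really be retained explicitly in front of the main term (as it is when the lemma is invoked in the proof of Proposition~\ref{laststep1}), rather than absorbed into a generic constant, though this reflects a typo in the lemma's statement rather than a flaw in your argument.
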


\noindent \textit{{Proof of Proposition \ref{laststep1}}.} Fix $r$ small
such that $V_{\varepsilon }=\phi _{r}w_{\varepsilon }$ defined by %
\eqref{test function} is in $H_{0,L}^{s,A}\left( \Omega \right) $. Recall
that $\tilde{x}_{i}=a_{i}^{-\frac{1}{2}}\left( Ox\right) _{i}$ and $%
\widetilde{u_{\varepsilon }}\left( \widetilde{x}\right) =u_{\varepsilon
}\left( x\right) $. We have
\begin{equation*}
\left\Vert u_{\varepsilon }\right\Vert _{L^{\frac{2n}{n-2s}}}^{2}=\det
\left( A\left( 0\right) \right) ^{\frac{n-2s}{n}}\left\Vert \widetilde{%
u_{\varepsilon }}\right\Vert _{L^{\frac{2n}{n-2s}}}^{2}.
\end{equation*}%
Let $K_{1}=\left\Vert \widetilde{u_{\varepsilon }}\right\Vert _{L^{\frac{2n}{%
n-2s}}}^{\frac{2n}{n-2s}}$. Then $K_{1}$ is independent of $\varepsilon $
and by calculations in the proof of Proposition 4.1 \cite{BCPS},
\begin{equation}
\int_{\Omega }\left\vert V_{\varepsilon }\right\vert ^{\frac{2n}{n-2s}%
}dx\geq K_{1}\det \left( A\left( 0\right) \right) ^{\frac{1}{2}}+O\left(
\varepsilon ^{n}\right) .  \label{Z ineq}
\end{equation}%
{ Since $\widetilde{w_{\varepsilon }}$ is an extremal function of
(\ref{genesoboinequality}), we have
\begin{equation*}
K_{1}^{-\frac{\left( n-2s\right) }{n}}\iint_{\mathbb{R}_{+}^{n+1}}y^{1-2s}%
\left\vert \nabla \widetilde{w_{\varepsilon }}\right\vert
^{2}dxdy=K_{s}\left( n\right) ^{-1}.
\end{equation*}%
When $n>4s$, if $2s<\sigma <n-2s,$
\begin{eqnarray}
Q_{\lambda }^{A}\left( V_{\varepsilon }\right)  &\leq &\frac{c_{s}(%
\mbox{det}(A(0)))^{\frac{1}{2}}\int_{\mathbb{R}_{+}^{n+1}}y^{1-2s}\left\vert
\nabla \widetilde{w_{\varepsilon }}\right\vert ^{2}dxdy-\lambda C\varepsilon
^{2s}+O\left( \varepsilon ^{n-2s}\right) +O\left( \varepsilon ^{\sigma
}\right) }{K_{1}^{\frac{\left( n-2s\right) }{n}}\det \left( A\left( 0\right)
\right) ^{\frac{n-2s}{2n}}+O\left( \varepsilon ^{n}\right) }  \notag \\
&\leq &\frac{c_{s}(\mbox{det}(A(0)))^{\frac{s}{n}}K_{s}\left( n\right)
^{-1}-\lambda C\varepsilon ^{2s}K_{1}^{-\frac{2\left( n-2s\right) }{\left(
n+2s\right) }}+O\left( \varepsilon ^{n-2s}\right) +O\left( \varepsilon
^{\sigma }\right) }{1+O\left( \varepsilon ^{n}\right) }  \notag \\
&<&c_{s}(\mbox{det}(A(0)))^{\frac{s}{n}}K_{s}(n)^{-1}  \label{qlambdabd}
\end{eqnarray}%
for $\varepsilon \ll 1$. If $\sigma \geq n-2s,$ we replace $\varepsilon
^{\sigma }$ by $\varepsilon ^{\sigma }\ln \frac{1}{\varepsilon }$ or $%
\varepsilon ^{n-2s}$ in $\left( \ref{qlambdabd}\right) $, then the same
estimate follows. When $n=2s,$ we replace $\varepsilon ^{2s}$ by $%
\varepsilon ^{2s}\ln \frac{1}{\varepsilon }$ in $\left( \ref{qlambdabd}%
\right) $, conclusion follows from the fact that $\sigma >2s.$}\ \qed\newline

\section{Proof of Theorem \protect\ref{T.2} - Boundary case}

{ Throughout this section, we assume $x_{0} = 0$ is on the
boundary of $\Omega $ and $\partial \Omega $ is $\alpha $-singular at $x_{0}$%
. Our main task is to prove (\ref{Q upperbound}) when $n>4s$, $\sigma >\frac{%
2s(n-2s)}{n-4s}$, and $1\leq \alpha <\frac{\sigma (n-4s)}{2s(n-2s)}$. }

We consider the mapping $\Phi: \Omega \rightarrow \widetilde {\Omega}$
defined in Section 3 and denote $\widetilde {x} = \Phi(x)$. Then by
Definition \ref{boundary regularity}, there exist a constant $\delta > 0$
and a sequence $(x_j) \subset \Omega$ (i.e. $(\widetilde{x}_j) \subset
\widetilde{\Omega}$) such that $x_j \to 0$ (i.e. $\widetilde{x}_j
\rightarrow 0$) as $j \rightarrow + \infty$ and $\Phi^{-1}( B(\widetilde{x}%
_j, \delta |\widetilde{x}_j |^\alpha) ) \subset \Omega $. Let
\begin{equation*}
V_{\varepsilon }\left( x,y\right) =\phi _{\delta }\left( x,y\right)
w_{\varepsilon }\left( x,y\right)
\end{equation*}%
defined as in $\left( \ref{test function}\right)$.  For fixed $\beta >\alpha$%
, we consider
\begin{equation*}
\widetilde{V}_{j}(x,y)=\phi _{\delta }(\frac{x-x_{j}}{\varepsilon
_{j}^{\alpha }},\frac{y}{\varepsilon _{j}^{\alpha }})\;w_{\varepsilon
_{j}^{\beta }}(x-x_{j},y),
\end{equation*}%
where $\varepsilon _{j}=|x_{j}-x_{0}|^{\alpha }$. Then $\widetilde{V}%
_{j}(x,y)$ can be rewritten as
\begin{eqnarray*}
\widetilde{V}_{j}(x,y) &=&\left( \varepsilon _{j}^{\alpha }\right) ^{\frac{%
2s-n}{2}}\phi _{\delta }(\frac{x-x_{j}}{\varepsilon _{j}^{\alpha }},\frac{y}{%
\varepsilon _{j}^{\alpha }})\;\left( \varepsilon _{j}^{\beta -\alpha
}\right) ^{\frac{2s-n}{2}}w_{1}(\frac{x-x_{j}}{\varepsilon _{j}^{\alpha
}\cdot \varepsilon _{j}^{\beta -\alpha }},\frac{y}{\varepsilon _{j}^{\alpha
}\cdot \varepsilon _{j}^{\beta -\alpha }}) \\
&=&\left( \varepsilon _{j}^{\alpha }\right) ^{\frac{2s-n}{2}} V_{\varepsilon
_{j}^{\beta -\alpha }}(\frac{x-x_{j}}{\varepsilon _{j}^{\alpha }},\frac{y}{%
\varepsilon _{j}^{\alpha }}).
\end{eqnarray*}%
Thus
\begin{equation*}
\nabla \widetilde{V}_{j}(x,y)=\varepsilon _{j}^{\frac{\alpha (2s-n-2)}{2}%
}\nabla V_{\varepsilon _{j}^{\beta -\alpha }}(\frac{x-x_{j}}{\varepsilon
_{j}^{\alpha }},\frac{y}{\varepsilon _{j}^{\alpha }}).
\end{equation*}%
Then, by change of variables, we have
\begin{eqnarray}
&& \int_{{\mathbb{R}}_{+}^{n+1}}y^{1-2s} \left( \nabla\widetilde{V}_{j}
\right)^T B(0) \nabla\widetilde{V}_{j} dxdy  \notag \\
&=& \int_{{\mathbb{R}} _{+}^{n+1}}y^{1-2s} \left( \nabla V_{\varepsilon
_{j}^{\beta -\alpha}}(x,y) \right)^T B(0) \nabla V_{\varepsilon _{j}^{\beta
-\alpha }}(x,y) dxdy.  \label{Z norm part 1}
\end{eqnarray}
The triangle inequality implies
\begin{equation*}
|x| \leq |x -x_j| + |x_j| \leq \delta \varepsilon_j^\alpha + \varepsilon_j.
\end{equation*}
Hence
\begin{equation}  \label{Z norm part 2}
\int_{{\mathbb{R}}_{+}^{n+1}}y^{1-2s}|x|^{\sigma }|\nabla \widetilde{V}%
_{j}|^{2} dxdy = O(\varepsilon_j^\sigma).
\end{equation}
Combining \eqref{Z norm part 1}, \eqref{Z norm part 2} and applying Lemma %
\ref{Z general estimate} we obtain
\begin{eqnarray*}
\left\Vert \widetilde{V}_{j}\right\Vert _{H_{0,L}^{s,A}\left( \mathcal{C}%
_{\Omega }\right) }^{2} &\leq& \left\Vert V_{\varepsilon _{j}^{\beta
-\alpha}} \right\Vert _{H_{0,L}^{s,A(0)}\left( \mathcal{C}_{\Omega }\right)
}^{2} + O(\varepsilon_j^\sigma) \\
&\leq& (\mbox{det} (A(0)))^{\frac{1}{2}} \left\Vert \widetilde{w}%
_{\varepsilon_j^{\beta - \alpha}} \right\Vert _{H_{0,L}^{s}\left( \mathcal{C}%
_{\widetilde{\Omega}}\right) }^{2} + O(\varepsilon_j^{(n-2s)(\beta -
\alpha)}) + O(\varepsilon_j^\sigma).
\end{eqnarray*}
Furthermore, by \eqref{uepsilonl2} and \eqref{Z ineq} we have
\begin{equation*}
\int_{{\mathbb{R}}^{n}}| \widetilde{V}_{j}(x,0)|^{\frac{2n}{n-2s}}dx=\int_{{%
\mathbb{R}}^{n}}| V_{\varepsilon _{j}^{\beta -\alpha }}(x,0)|^{\frac{2n}{n-2s%
}}dx \geq K_{1}+O(\varepsilon _{j}^{n(\beta -\alpha )}),
\end{equation*}%
and for $n>4s$,
\begin{eqnarray*}
\int_{{\mathbb{R}}^{n}}|\widetilde{V}_{j}(x,0)|^{2}dx &=&\varepsilon
_{j}^{2s\alpha }\int_{{\mathbb{R}}^{n}}|V_{\varepsilon _{j}^{\beta -\alpha
}}(x,0)|^{2}dx \\
&=&\varepsilon _{j}^{2s\alpha }\left[ C\varepsilon _{j}^{2s(\beta -\alpha
)}+O(\varepsilon _{j}^{(n-2s)(\beta -\alpha )})\right].
\end{eqnarray*}%
Repeating our argument in Proposition \ref{laststep1}, we obtain
\begin{eqnarray*}
Q_{\lambda }^{A}\left( \widetilde{V}_j\right) &\leq & \frac{c_{s} (\mbox{det}%
(A(x_0)))^{\frac{s}{n}} K_s\left( n\right)^{-1} -\lambda C\varepsilon
_{j}^{2s\beta } K_1^{-\frac{2(n-2s)}{n+2s}} +O(\varepsilon
_{j}^{(n-2s)(\beta -\alpha )})+O\left( \varepsilon _{j}^{\sigma }\right) }{%
1+O\left( \varepsilon ^{n}\right) }.  \label{qlambdabound}
\end{eqnarray*}%
By our assumption on $\sigma ,$ we can choose $\beta $ such that
\begin{equation}
\frac{\alpha \left( n-2s\right) }{n-4s}<\beta <\frac{\sigma }{2s}.
\label{beltachoice}
\end{equation}%
It then follows from $\left( \ref{beltachoice}\right) $ that
\begin{equation}
2s\beta <\min \left( \sigma ,\left( n-2s\right) \left( \beta -\alpha \right)
\right) .  \label{beltabound}
\end{equation}%
$\left( \ref{qlambdabound}\right) $ and $\left( \ref{beltabound}\right) $
yield
\begin{equation*}
Q_{\lambda }^{A}\left( \widetilde{V}_{j} \right) < c_{s} (\mbox{det}%
(A(x_0)))^{\frac{s}{n}} K_s\left( n\right)^{-1}.
\end{equation*}

\section{Proof of Theorem \protect\ref{T.3} - Nonexistence}

When $\lambda $ is nonpositive, our nonexistence result relies on the
following Pohozaev identity.

\begin{lemma}
\label{lemma pz} Assume $\partial \Omega \in C^{1}$ and $a_{ij}\in C^{1}(%
\overline{\Omega }\setminus \{x_{0}\})$. Let $A^{\prime
}(x)=(a_{ij}{}^{\prime }(x))$ where $a_{ij}^{\prime }(x):=\nabla
a_{ij}(x)\cdot (x-x_{0})$. Assume further that each $a_{ij}^{\prime }$
extends continuously to $x_{0}$. Then for $u\in C^{1}(\overline{\Omega })$
and $w=E_{A}^{s}(u)$, we have
\begin{eqnarray}
&&\frac{1}{2}\int_{\partial _{L}\mathcal{C}_{\Omega }}y^{1-2s}(\nabla
_{x}w)^{T}A(x)(\nabla _{x}w)(x-x_{0})\cdot \nu _{\Omega }\;d\sigma  \notag \\
&=&\frac{s}{c(s)}\lambda \int_{\Omega }u^{2}dx-\frac{1}{2}\iint_{\mathcal{C}%
_{\Omega }}y^{1-2s}(\nabla _{x}w)^{T}A^{\prime }(x)(\nabla _{x}w)\;dxdt,
\label{pz}
\end{eqnarray}%
where $\nu _{\Omega }$ is the outward normal of $\partial \Omega $, and $%
d\sigma $ is the area element of $\partial _{L}\mathcal{C}_{\Omega }$.
\end{lemma}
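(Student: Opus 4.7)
\noindent\textbf{Proof plan for Lemma \ref{lemma pz}.} The plan is to run the classical Pohozaev multiplier argument on the degenerate elliptic extension problem. Specifically, I would multiply the equation $\operatorname{div}(y^{1-2s}B(x)\nabla w)=0$ by the vector-field multiplier $\psi:=(x-x_{0})\cdot\nabla_{x}w$ and integrate by parts over (a truncation of) $\mathcal{C}_{\Omega}$. Expanding $B\nabla w\cdot\nabla\psi$, using symmetry of $A$ and the identity
\[
\sum_{ijk}A_{ik}(x-x_{0})_{j}\partial_{x_{k}}w\,\partial_{x_{i}x_{j}}w=\tfrac{1}{2}(x-x_{0})\cdot\nabla_{x}\bigl[(\nabla_{x}w)^{T}A\nabla_{x}w\bigr]-\tfrac{1}{2}(\nabla_{x}w)^{T}A'(x)\nabla_{x}w,
\]
together with the analogous identity $(x-x_{0})_{j}\partial_{y}w\,\partial_{x_{j}y}w=\tfrac{1}{2}(x-x_{0})\cdot\nabla_{x}[(\partial_{y}w)^{2}]$, isolates precisely the divergence structure I need. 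Integrating the divergence term in $x$ via $(x-x_{0})\cdot\nabla_{x}f=\operatorname{div}_{x}((x-x_{0})f)-nf$ produces a lateral boundary contribution plus $-n/2$ times the volume integral of $y^{1-2s}\nabla w^{T}B\nabla w$.

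\noindent On the lateral boundary the condition $w=0$ forces $\partial_{y}w=0$ and $\nabla_{x}w=(\partial_{\nu_{\Omega}}w)\nu_{\Omega}$, so that the boundary contributions coming both from the original IBP and from the divergence identity combine to exactly $\tfrac{1}{2}\int_{\partial_{L}\mathcal{C}_{\Omega}}y^{1-2s}(\nabla_{x}w)^{T}A(x)\nabla_{x}w\,(x-x_{0})\cdot\nu_{\Omega}\,d\sigma$, which is the LHS of \eqref{pz}. On the bottom face $y=0$, the co-normal limit $-c_{s}\lim_{y\to0^{+}}y^{1-2s}\partial_{y}w=(-\mathcal{L})^{s}u=u^{p}+\lambda u$ (with $p=\tfrac{n+2s}{n-2s}$) converts the bottom contribution into $\tfrac{1}{c_{s}}\int_{\Omega}(u^{p}+\lambda u)(x-x_{0})\cdot\nabla_{x}u\,dx$, and a standard integration by parts in $x$ yields $-\tfrac{n-2s}{2c_{s}}\int_{\Omega}u^{p+1}\,dx-\tfrac{n\lambda}{2c_{s}}\int_{\Omega}u^{2}\,dx$, where the first coefficient uses the critical identity $\tfrac{n}{p+1}=\tfrac{n-2s}{2}$.

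\noindent The resulting identity still contains the two volume quantities $T:=\iint y^{1-2s}(\nabla_{x}w)^{T}A\nabla_{x}w$ and $T_{y}:=\iint y^{1-2s}(\partial_{y}w)^{2}$, which I eliminate using two auxiliary relations. Multiplying the extension equation by $w$ and integrating by parts gives the weak formulation $T+T_{y}=\tfrac{1}{c_{s}}\int_{\Omega}u^{p+1}+\tfrac{\lambda}{c_{s}}\int_{\Omega}u^{2}$. Multiplying instead by $y\,\partial_{y}w$ and integrating by parts, using $B\nabla w\cdot\nabla(y\partial_{y}w)=(\partial_{y}w)^{2}+\tfrac{y}{2}\partial_{y}[B\nabla w\cdot\nabla w]$ and an IBP in $y$ with the $y^{2-2s}$ weight (the boundary term at $y=0$ vanishes because $\partial_{y}w\sim y^{2s-1}$ forces $y^{2-2s}|\nabla w|^{2}\to0$), yields the algebraic relation $sT_{y}=(1-s)T$, equivalently $T=s(T+T_{y})$. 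Substituting these into the main identity produces cancellation of the $\int u^{p+1}$ terms (critical exponent) and leaves precisely $\tfrac{s\lambda}{c_{s}}\int_{\Omega}u^{2}-\tfrac{1}{2}\iint y^{1-2s}(\nabla_{x}w)^{T}A'(x)\nabla_{x}w\,dxdy$, which is the RHS of \eqref{pz}.

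\noindent The main technical obstacle is justifying all of these integrations by parts. The weight $y^{1-2s}$ is degenerate at $y=0$ and $\partial_{y}w$ is generically unbounded there, so one must work first on $\Omega\times(\varepsilon,R)$, control the decay of $w$ as $y\to\infty$, verify that the bottom limits carry through (using $u\in C^{1}(\overline{\Omega})$ to handle $(x-x_{0})\cdot\nabla_{x}u$ and the limit identification of $y^{1-2s}\partial_{y}w$), and let $\varepsilon\to0$, $R\to\infty$. A secondary subtlety is the possible failure of $C^{1}$ regularity of $A$ at $x_{0}$: one must excise a small ball $B_{\rho}(x_{0})$ before applying the symmetry identity, then use that $A'$ extends continuously to $x_{0}$ to pass to the limit $\rho\to0$.
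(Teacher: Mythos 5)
Your proposal is correct and produces the identity, but it follows a genuinely different route from the paper's. The paper runs a single Pohozaev multiplier argument with the full $(n+1)$-dimensional vector field $z\cdot\nabla w=(x-x_0)\cdot\nabla_x w+y\,\partial_y w$, which makes the divergence identity close in one stroke: one gets $\mathrm{div}\bigl[y^{1-2s}(z\cdot\nabla w)B\nabla w\bigr]$ minus a $\frac12\nabla\bigl[(\nabla w)^TB\nabla w\bigr]\cdot z$ term, integrates over $\Omega\times(\varepsilon,R)$, kills the top cap along a subsequence $R_i\to\infty$ (justified by a contradiction argument using $w\in H^{s,A}_{0,L}$), and reads off the bottom face via the Neumann condition; only the standard weak formulation for $w$ is needed to convert the volume energy into $\int(u^{p+1}+\lambda u^2)$. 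You instead use the $n$-dimensional multiplier $(x-x_0)\cdot\nabla_x w$, which leaves two separate volume quantities $T=\iint y^{1-2s}(\nabla_x w)^TA\nabla_x w$ and $T_y=\iint y^{1-2s}(\partial_y w)^2$ in play, and you eliminate them with the weak formulation $T+T_y=\frac1{c_s}\int(u^{p+1}+\lambda u^2)$ together with the balance identity $sT_y=(1-s)T$ obtained from the auxiliary multiplier $y\,\partial_y w$. Notice your two multipliers sum to the paper's one; the paper effectively performs both IBPs simultaneously and never needs the intermediate $s\colon(1-s)$ energy split. I checked your coefficient bookkeeping: from the $x$-multiplier you obtain $0=\frac12\int_{\partial_L}y^{1-2s}(\nabla_xw)^TA\nabla_xw\,(x-x_0)\cdot\nu_\Omega-\frac{n-2s}{2c_s}\int u^{p+1}-\frac{n\lambda}{2c_s}\int u^2+\bigl(\frac{n}{2}(T+T_y)-T\bigr)+\frac12\iint y^{1-2s}(\nabla_xw)^TA'\nabla_xw$, and since $T=s(T+T_y)$ one gets $\frac{n}{2}(T+T_y)-T=\frac{n-2s}{2}(T+T_y)$, so the $u^{p+1}$ terms cancel and the $\lambda u^2$ coefficient becomes $\frac{s}{c_s}$, matching \eqref{pz}. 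Your boundary accounting (at $y=0$: $y^{2-2s}(\partial_y w)^2\sim y^{2s}\to0$ and $y^{2-2s}|\nabla_x w|^2\to 0$; at $y=R$: a subsequence argument; on $\partial_L$: $\partial_yw=0$ and $\nabla_xw\parallel\nu_\Omega$) is also sound, as is the algebraic identity $\sum_{ijk}A_{ij}(x-x_0)_k\,\partial_{x_i}w\,\partial_{x_jx_k}w=\frac12(x-x_0)\cdot\nabla_x[(\nabla_xw)^TA\nabla_xw]-\frac12(\nabla_xw)^TA'\nabla_xw$. Your route is slightly longer (you need to justify the extra IBP with the $y\,\partial_y w$ multiplier and its boundary terms), but it buys a transparent structural fact — the $s:(1-s)$ split of the extension energy — that the paper's one-shot argument never isolates. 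The paper handles the lack of $C^1$ regularity of $A$ at $x_0$ by the Degiovanni--Musesti--Squassina approximation of \cite{DMS} rather than by excising a small ball, but that is a minor technical detail and both resolutions work.
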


\begin{proof}
Without loss of generality, we assume $x_{0}=0$. From approximation
arguments in \cite{DMS}, it suffices to prove $\left( \ref{pz}\right) $ for
coefficients $a_{ij}\in C^{1}(\overline{\Omega })$ and functions $u\in C^{2}(%
\overline{\Omega })$. Let $z:=(x,y)\in \Omega \times {\mathbb{R}}_{+}$.
Since the matrix $B(x)=\left(
\begin{array}{cc}
A(x) & 0 \\
0 & 1%
\end{array}%
\right) $ is symmetric, we have
\begin{eqnarray}
&&{\text{div}}\left[ y^{1-2s}\left( z\cdot \nabla w\right) B\left( x\right)
\nabla w\right]   \notag \\
&=&(z\cdot \nabla w)\mathrm{\text{div}}\left[ y^{1-2s}B\left( x\right)
\nabla w\right] +\left[ y^{1-2s}B\left( x\right) \nabla w\right] ^{T}\nabla
\left( z\cdot \nabla w\right)   \notag \\
&=&(z\cdot \nabla w)\text{div}\left[ y^{1-2s}B\left( x\right) \nabla w\right]
{+y}^{1-2s}\left( \nabla w\right) ^{T}B\left( x\right) \nabla \left( z\cdot
\nabla w\right)   \label{div total}
\end{eqnarray}%
A direct calculation shows that
\begin{eqnarray}
&&\frac{1}{2}\nabla \lbrack (\nabla w)^{T}B(x)(\nabla w)]\cdot z  \notag \\
&=&(\nabla w)^{T}B(x)\nabla (\nabla w\cdot z)-(\nabla w)^{T}B(x)\nabla w+%
\frac{1}{2}(\nabla _{x}w)^{T}A^{\prime }(x)\nabla _{x}w.  \label{grad norm}
\end{eqnarray}%
Combining $\left( \ref{div total}\right) $ and $\left( \ref{grad norm}%
\right) $, we have%
\begin{eqnarray}
&&\text{div}\left[ y^{1-2s}\left( z\cdot \nabla w\right) B\left( x\right)
\nabla w\right]   \notag \\
&{=}&\left( z\cdot \nabla w\right) \text{div}\left[ y^{1-2s}B\left( x\right)
\nabla w\right]   \notag \\
&&+\frac{y^{1-2s}}{2}\nabla \left[ \left( \nabla w\right) ^{T}B\left(
x\right) \nabla w\right] \cdot z  \notag \\
&&+y^{1-2s}(\nabla w)^{T}B(x)\nabla w-\frac{y^{1-2s}}{2}(\nabla
_{x}w)^{T}A^{\prime }(x)\nabla _{x}w.  \label{div total 2}
\end{eqnarray}%
Integrating both sides of $\left( \ref{div total 2}\right) $ over the set $%
\mathcal{C}_{R,\varepsilon }:=\Omega \times (\varepsilon ,R)$ for fixed $%
R>\varepsilon >0$ we obtain%
\begin{eqnarray}
&&\int_{\partial {\mathcal{C}_{R,\varepsilon }}}y^{1-2s}(z\cdot \nabla
w)(\nabla w)^{T}B(x)\nu \;dS  \notag \\
&=&\iint_{\mathcal{C}_{R,\varepsilon }}(z\cdot \nabla w)\text{div}%
[y^{1-2s}B(x)\nabla w]+\frac{y^{1-2s}}{2}\nabla \lbrack (\nabla
w)^{T}B(x)(\nabla w)]\cdot z\;dxdy  \notag \\
&&+\iint_{\mathcal{C}_{R,\varepsilon }}y^{1-2s}(\nabla w)^{T}B(x)\nabla w-%
\frac{y^{1-2s}}{2}(\nabla _{x}w)^{T}A^{\prime }(x)\nabla _{x}w\;dxdy.
\label{int eq}
\end{eqnarray}%
The first term on the right hand side of $\left( \ref{int eq}\right) $ is
zero since $\text{div}\left[ y^{1-2s}B\left( x\right) \nabla w\right] =0$ in
$\Omega \times {\mathbb{R}}_{+}$. Integrating by parts for the second term
on the right hand side of $\left( \ref{int eq}\right) $ we derive
\begin{eqnarray}
&&\frac{1}{2}\iint_{\mathcal{C}_{R,\varepsilon }}y^{1-2s}\nabla \lbrack
(\nabla w)^{T}B(x)(\nabla w)]\cdot z\;dxdy  \notag \\
&=&-\frac{n+1}{2}\iint_{\mathcal{C}_{R,\varepsilon }}y^{1-2s}(\nabla
w)^{T}B(x)(\nabla w)\;dxdy  \notag \\
&&-\frac{1-2s}{2}\iint_{\mathcal{C}_{R,\varepsilon }}y^{-2s}\cdot y(\nabla
w)^{T}B(x)(\nabla w)\;dxdy  \notag \\
&&+\frac{1}{2}\int_{\partial \mathcal{C}_{R,\varepsilon }}y^{1-2s}(\nabla
w)^{T}B(x)(\nabla w)(z\cdot \nu )\;dS  \notag \\
&=&-\frac{n+2-2s}{2}\iint_{\mathcal{C}_{R,\varepsilon }}y^{1-2s}(\nabla
w)^{T}B(x)(\nabla w)\;dxdy  \notag \\
&&+\frac{1}{2}\int_{\partial \mathcal{C}_{R,\varepsilon }}y^{1-2s}(\nabla
w)^{T}B(x)(\nabla w)(z\cdot \nu )\;dS.  \label{int grad norm}
\end{eqnarray}%
The boundary integral in $\left( \ref{int grad norm}\right) $ can be written
into three terms:%
\begin{eqnarray}
&&\int_{\partial \mathcal{C}_{R,\varepsilon }}y^{1-2s}(\nabla
w)^{T}B(x)(\nabla w)(z\cdot \nu _{\Omega })\;dS  \notag \\
&=&\int_{\Omega \times \{y=R\}}y^{2-2s}(\nabla w)^{T}B(x)(\nabla w)\;dx
\notag \\
&&-\int_{\Omega \times \{y=\varepsilon \}}y^{2-2s}(\nabla w)^{T}B(x)(\nabla
w)\;dx  \notag \\
&&+\int_{\partial _{L}\mathcal{C}_{R,\varepsilon }}y^{1-2s}(\nabla
w)^{T}B(x)(\nabla w)(x\cdot \nu _{\Omega })\;d\sigma .  \label{right bd}
\end{eqnarray}%
As in $\left( \ref{right bd}\right) $, we write the left hand side of $%
\left( \ref{int eq}\right) $ into three parts:%
\begin{eqnarray}
&&\int_{\partial {\mathcal{C}_{R,\varepsilon }}}y^{1-2s}(z\cdot \nabla
w)(\nabla w)^{T}B(x)\nu \;dS  \notag \\
&=&\int_{\Omega \times \{y=R\}}y^{1-2s}(z\cdot \nabla w)\frac{\partial w}{%
\partial y}\;dx-\int_{\Omega \times \{y=\varepsilon \}}y^{1-2s}(z\cdot
\nabla w)\frac{\partial w}{\partial y}\;dx  \notag \\
&&+\int_{\partial _{L}{\mathcal{C}_{R,\varepsilon }}}y^{1-2s}(z\cdot \nabla
w)(\nabla w)^{T}B(x)\nu _{\Omega }\;d\sigma   \notag \\
&=&\int_{\Omega \times \{y=R\}}y^{1-2s}(z\cdot \nabla w)\frac{\partial w}{%
\partial y}\;dx-\int_{\Omega \times \{y=\varepsilon \}}y^{1-2s}(z\cdot
\nabla w)\frac{\partial w}{\partial y}\;dx  \notag \\
&&+\int_{\partial _{L}{\mathcal{C}_{R,\varepsilon }}}y^{1-2s}(\nabla
w)^{T}B(x)(\nabla w)(x\cdot \nu _{\Omega })\;d\sigma .  \label{left bd}
\end{eqnarray}%
Here the last equality comes from the fact that $-\nabla w/|\nabla w|=\nu
_{\Omega }$ on $\partial _{L}\mathcal{C}$. Combining $\left( \ref{int eq}%
\right) $-$\left( \ref{left bd}\right) $, we obtain

\begin{eqnarray}
&&\int_{\partial _{L}{\mathcal{C}_{R,\varepsilon }}}y^{1-2s}(\nabla
w)^{T}B(x)(\nabla w)(x\cdot \nu _{\Omega })\;d\sigma   \notag \\
&=&\int_{\Omega \times \{y=\varepsilon \}}y^{1-2s}(z\cdot \nabla w)\frac{%
\partial w}{\partial y}\;dx-\frac{1}{2}\int_{\Omega \times \{y=\varepsilon
\}}y^{2-2s}(\nabla w)^{T}B(x)(\nabla w)\;dx  \notag \\
&&+\frac{2s-n}{2}\iint_{\mathcal{C}_{R,\varepsilon }}y^{1-2s}(\nabla
w)^{T}B(x)(\nabla w)\;dxdy  \notag \\
&&-\frac{1}{2}\iint_{\mathcal{C}_{R,\varepsilon }}y^{1-2s}(\nabla
_{x}w)^{T}A^{\prime }(x)\nabla _{x}w\;dxdy  \notag \\
&&+\frac{1}{2}\int_{\Omega \times \{y=R\}}y^{2-2s}(\nabla w)^{T}B(x)(\nabla
w)\;dx-\int_{\Omega \times \{y=R\}}y^{1-2s}(z\cdot \nabla w)\frac{\partial w%
}{\partial y}\;dx.  \label{int eq2}
\end{eqnarray}%
The second term on the right hand side of $\left( \ref{int eq2}\right) $
approaches to zero as $\varepsilon $ does since $s<1$. For the last two
terms, there exists $C>0$ such that for all $R\geq 1$, we have
\begin{eqnarray}
&&\left\vert \frac{1}{2}\int_{\Omega \times \{y=R\}}y^{2-2s}(\nabla
w)^{T}B(x)(\nabla w)\;dx-\int_{\Omega \times \{y=R\}}y^{1-2s}(z\cdot \nabla
w)\frac{\partial w}{\partial y}\;dx\right\vert   \label{bd lim R} \\
&\leq &C\int_{\Omega \times \{y=R\}}R^{2-2s}|\nabla w|^{2}dx.  \notag
\end{eqnarray}%
We claim that there exists a sequence $\{R_{i}\}$ such that
\begin{equation*}
\int_{\Omega \times \{y=R_{i}\}}R_{i}^{2-2s}|\nabla w|^{2}dx\rightarrow
0\quad \text{as}\quad R_{i}\rightarrow \infty .
\end{equation*}%
Suppose by contradiction there exist $a_{0}>0$ and $R_{0}\geq 1$ such that
\begin{equation*}
\int_{\Omega \times \{t=R\}}R^{2-2s}|\nabla w|^{2}dx\geq a_{0}\quad \text{%
for all }R\geq R_{0}.
\end{equation*}%
Then for any $R\geq R_{0}$,
\begin{equation*}
\int_{{\mathbb{R}}_{+}}\int_{\Omega }y^{1-2s}|\nabla w|^{2}dxdy\geq
\int_{R_{0}}^{R}\frac{1}{y}\left[ \int_{\Omega }y^{2-2s}|\nabla w|^{2}dx%
\right] dy\geq a_{0}\ln \frac{R}{R_{0}}.
\end{equation*}%
This contradicts $w\in H_{0,L}^{s}(\mathcal{C}_{\Omega })$. By $\left( \ref%
{fractionalbnw}\right) ,$%
\begin{align*}
& \lim_{\varepsilon \rightarrow 0}\int_{\Omega \times \{y=\varepsilon
\}}y^{1-2s}(z\cdot \nabla w)\frac{\partial w}{\partial y}\;dx \\
=& -\frac{1}{c_{s}}\int_{\Omega \times \{y=0\}}(x\cdot \nabla _{x}w)(\lambda
w+|w|^{\frac{n+2s}{n-2s}})\;dx \\
=& -\frac{1}{4c_{s}}\int_{\Omega \times \{y=0\}}\nabla _{x}|x|^{2}\cdot
\nabla _{x}\left( \lambda w^{2}+\frac{n-2s}{n}|w|^{\frac{n+2s}{n-2s}%
+1}\right) \;dx \\
=& \frac{n}{2c_{s}}\int_{\Omega }\lambda u^{2}+\frac{n-2s}{n}|u|^{\frac{2n}{%
n-2s}}\;dx.
\end{align*}%
Furthermore,%
\begin{eqnarray}
&&\lim_{R\rightarrow \infty }\lim_{\varepsilon \rightarrow 0}\iint_{\mathcal{%
C}_{R,\varepsilon }}y^{1-2s}(\nabla w)^{T}B(x)(\nabla w)\;dxdy  \notag \\
&=&\iint_{\mathcal{C}_{\Omega }}y^{1-2s}(\nabla w)^{T}B(x)(\nabla w)dxdy
\notag \\
&=&\frac{1}{c_{s}}\int_{\Omega }\lambda u^{2}+|u|^{\frac{2n}{n-2s}}\;dx.
\label{interior lim}
\end{eqnarray}%
Taking $\varepsilon \rightarrow 0$ and $R=R_{i}\rightarrow \infty $ in $%
\left( \ref{int eq2}\right) $ the lemma follows from $\left( \ref{bd lim R}%
\right) $-$\left( \ref{interior lim}\right) $.
\end{proof}

\noindent \textit{Proof of Theorem \ref{T.3}.} Suppose by contradiction that
problem $\left( \ref{fractionalbn}\right) $ admits a positive solution $u$
in $C^{1}(\overline{\Omega })$. Then, its extension given by $%
w=E_{s}^{A}(u)\in C^{1}(\overline{\mathcal{C}}_{\Omega })$ is also positive
in $\mathcal{C}_{\Omega }$ and satisfies $w=0$ on $\partial _{L}\mathcal{C}%
_{\Omega }$. By the Hopf lemma (see for example \cite{GT}), $\nabla _{x}w$
is nonzero on $\partial _{L}\mathcal{C}_{\Omega }$. Note also that the
assumption of $\Omega $ is star-shaped implies $(x-x_{0})\cdot \nu _{\Omega
}>0$ for all $x\in \overline{\Omega }$. Hence, the left-hand side of $\left( %
\ref{pz}\right) $ is strictly positive. On the other hand, since $\lambda
\leq 0$ and $A^{\prime }(x)$ is positive semi-definite, the right-hand side
of $\left( \ref{pz}\right) $ is non-positive. This contradicts Lemma \ref%
{lemma pz}.\ \qed

\end{document}